\theoremstyle{plain}
\newtheorem{theorem}{Theorem}[section]
\newtheorem{lemma}{Lemma}[section]
\newtheorem{proposition}{Proposition}[section]
\theoremstyle{definition}
\theoremstyle{remark}
\newtheorem{remark}{Remark}[section]
\newtheorem{example}{Example}[section]
\renewcommand{\div}{{\rm div} }
\numberwithin{equation}{section}
\begin{document}






\title{Spherically symmetric collapsing solution in the form of shadow wave}
\author{Marko Nedeljkov and Sanja Ru\v{z}i\v{c}i\'{c}}

\date{}
\maketitle

\begin{abstract}
This paper deals with isothermal Euler-Poisson system which is used to model collapse of self-gravitating Newtonian star. Density dependent viscosity term is added on the right-hand side of momentum equation and it has been proved that there exists stable shadow wave solution with unbounded density at the origin. This results is extended to the vanishing pressure case.
\end{abstract}

\section{Introduction}

In the recent years the authors dedicated great attention to the problem of finding singular (unbounded) solutions to the multidimensional compressible Euler and Navier-Stokes equations. The important results are obtained under the assumption that  related flow is cylindrical ($N=2$ below) or spherical ($N=3$), while the analysis of general case brings a lot of open questions. For example, the authors in \cite{imp1, imp2} deal with spherical solution to compressible Euler and Navier-Stokes system, where they proved  that the density will blow up in  finite time. In addition, many authors deal with so called similarity flow since  numerical experiments have shown that  a solution often exhibits such a behavior at least for some classes of initial data (see \cite{Jen} for example).

\medskip

Consider the model
\begin{equation}\label{NDmodel}
\begin{split}
\partial_t \rho +\div (\rho {\bf u})=0,\\
\partial_t (\rho {\bf u})+\div (\rho {\bf u}\otimes {\bf u})=\div T +\rho {\bf f} 
\end{split}
\end{equation}
which describes the motion of Newtonian fluid in $N$ spatial components. The density of the fluid is denoted by $\rho>0$, ${\bf u}$ is velocity, $p=p(\rho)$ pressure, ${\bf f}$ is gravitational force, while $T$ is stress tensor given by
\[T=S-pI,\]
where
\[S=\mu (\nabla {\bf u}+\nabla {\bf u}^t-\frac{2}{N}\div {\bf u}I)+\big(\lambda+\frac{2}{N}\mu\big) \div {\bf u}I\]
is viscous stress tensor. Here $\mu$ denotes shear viscosity, while $\lambda$ is the bulk viscosity coefficient. To ensure physical relevance, we will suppose that coefficients $\mu$ and $\lambda$ depend on density $\rho$.

Having in mind that the authors proved the existence of unbounded solution to (\ref{NDmodel}) with $N=2,3$ and $\mu(\rho)=\lambda(\rho)=0$, our aim is to investigate how the introduction of density dependent term $\div T$ from the right hand side of $(\ref{NDmodel})_2$ (called viscosity term in the rest of the paper) influence the behavior of the solution. The function $\mu(\rho)$ can potentially blow up if density becomes unbounded, so the viscosity term should have stabilizing effect. That rises the question if such  term will be strong enough to counteract gravity and prevent blow up of density. Besides that, it is known that  non-monotone density profile can produce a shock in the velocity. Viscosity term could have a smoothing effect in that case and prevent the solutions in the form of shocks.

\medskip

Take $\mu(\rho)=t\rho$ and $\lambda(\rho)=0$ (note that so-called BD entropy condition defined in \cite{BD} is satisfied). In the literature authors usually suppose that $\mu(\rho)=\rho$. However, we are taking $\mu(\rho)=t\rho$ to preserve existence of self-similar solution (see Dafermos 
- DiPerna approach introduced in \cite{DafermosPerna}).

Further, take $N=3.$ 
The stress tensor $T$ is then equal to
\[T=\frac{1}{2}\rho t(\nabla {\bf u}+\nabla {\bf u}^t)-p(\rho)I.\]
\medskip

 This model can be used as a model for gravitational collapse, the process which explains structure formation in the universe. Initially, there is a smooth distribution of matter, but after sufficient accretion, the matter may collapse to form pockets of higher density (stars or black holes for example). Even though magnetic field and rotation can play significant role in gravitational collapse, they are often neglected. In the simplified models, it is supposed that the gravity, which tends to draw matter towards the center of the gravity, is the only force that may cause gravitational collapse. However, the star's core is very hot (i.e.\ the energy in the core is very high), which creates pressure within the gas which counteracts the force of gravity. If the gravity overpowers the pressure gradient, gravitational collapse occurs. More precisely, gravitational collapse occurs when there is no more nuclear fuel in the core of the star for fusion process  to  create outward pressure which will counteract the gravitational force.  One of the biggest challenges in the relativistic astrophysics is how to explain and completely understand the final outcome of gravitational collapse. It is believed that the final outcome can be black hole or naked singularity. Roughly speaking, the difference between naked singularity and black hole can be explained in the following way: naked singularity can communicate with faraway observers while black hole cannot. Black hole is surrounded by a boundary layer called event horizon which has a role to ``hide'' it, so nothing that falls inside can get out, while naked singularity does not have such boundary.
 
 
  It should be noted that  it is not completely explained what is the exact role that pressure plays in the end state of gravitational collapse and there are still many open questions about this subject. Spherically symmetric collapse has been extensively studied from both numerical and theoretical point of view (see \cite{astr2, astr1, astr3, Pan}). On the other side, non-spherical collapse gives rise to many open questions due to the lack of good enough models that could explain such phenomena (see \cite{Pan2}).

 In \cite{Hadzgamma} authors proved the existence of collapsing, smooth radially symmetric self-similar solution to the Euler-Poisson system with the pressure satisfying polytropic equation of state
\[p(\rho)=\kappa \rho^\gamma,\quad \gamma \in \Big(1,\frac{4}{3}\Big),\,\kappa>0.\]  
Thus, in this case the density blows up in finite time (see also \cite{Brenner, Weber}). Self-similar gravitational collapse is investigated in the isothermal case when $\gamma=1,$ too (see \cite{Hadz1, shu}). On the other side, it has been proved that it does not exist collapsing solution with finite total mass and energy for $\gamma>\frac{4}{3}$ (see \cite{deng}). In so-called mass-critical case ($\gamma=\frac{4}{3}$), the authors proved existence of collapsing solutions, too (\cite{Gol, Makino}).  There are also papers dealing with vanishing radial pressure case, which is assumption used to model naked singularities as an end state of gravitational collapse (see \cite{Barve,ash, Sha} for more details). Thus, with the introduction of viscosity term in the equations, there are several cases that should be analyzed separately: case $p(\rho)=0$ and the cases when the pressure follows isothermal equation of state $p(\rho)=\kappa \rho,$ $\kappa>0$ or polytropic equation of state with $\gamma\in(1,\frac{4}{3}].$ The case $\gamma>\frac{4}{3}$ is not of interest to us, since it has been proved that no collapsing solution exists, so the introduction of viscosity term is not necessary. It is also possible to consider naked singularity solution as an end state of  collapse of spherical (but not self-similar) cloud with locally varying equation of state
\begin{equation}\label{varying_p}
p(r,\rho)=\kappa(r)\rho^\gamma, \quad \gamma\geq 1,\, \kappa(r)\geq 0. 
\end{equation}
In this case $k(r)$ is not constant but it vanishes in the neighborhood of the centre of the cloud (for more details see \cite{Sha}).

In the recent work, authors in \cite{chen} proved the global existence of finite energy and finite mass solution to the compressible Euler-Poisson system with finite initial energy and finite initial mass using the method of convex integration. The idea behind the work is to approximate the initial system by considering solution to compressible Navier-Stokes-Poisson equations with density dependent viscosity term. Our aim is to approach this problem from the different point of view. Namely, our goal is to prove that there exists collapsing spherically symmetric solution to the system (\ref{3Dmodel}), which can be approximated by shadow wave. 
Shadow wave solutions (SDW for short) are introduced in \cite{mn2010} with an idea to approximate wide class of singular solutions (including the well known delta and singular shocks). The main idea is to introduce small parameter $\varepsilon>0$ which will be used to create a layer around the SDW front in which the solution components could potentially blow up. By taking the distributional  limit when $\varepsilon\to 0$, one obtains the weak solution of the considered system.

 In this paper we mostly focus on  modeling singularities in the origin. It is known that the small changes in initial data can make a big difference in the solution dynamics, so our goal here is to find the conditions that collapsing solution has to satisfy in the outer layers where self-similar solution will be considered. Self-similar solution solves  ODEs system and it is greatly impacted by the initial conditions imposed around the origin. Collapsing solution in the form of shadow wave is not limited to only one class of initial data, and as a consequence, it can be used to model many phenomena connected with gravitational collapse. It is worth mentioning that analysis of self-similar and radially symmetric  solutions, especially if those  solutions are obtained using numerical approximations, is limited and it cannot be applied around origin. The use of shadow wave solves that problem.

 Our aim is  to describe gravitational collapse using shadow waves in two cases: the first one is vanishing pressure case ($p=0$), and the second one is the case when the pressure satisfied isothermal equation of state 
 \[p(\rho)=\kappa \rho, \quad \kappa>0,\]
 where $\sqrt{\kappa}$ is speed of sound.
 Without loss of generality we shall assume that $\kappa=1$, since $\kappa$ can be eliminated from the above equations using simple change of variables. To simplify notation, it will be assumed that pressure equals
 \[p(\rho)=A \rho,\]
 where the constant $A$ is equal to 0 (vanishing pressure case) or 1 (isothermal case).




So, under the assumptions listed above, the system (\ref{NDmodel}) takes the final form
\begin{equation}\label{3Dmodel}
\begin{split}
\partial_t \rho +\div (\rho {\bf u})=0,\\
\partial_t (\rho {\bf u})+\div (\rho {\bf u}\otimes {\bf u})=\div \tilde{T} +\rho {\bf f} ,
\end{split}
\end{equation}
where the symmetric stress tensor is given by
\[\tilde{T}=\frac{1}{2}\rho t(\nabla {\bf u}+\nabla {\bf u}^t)-p(\rho)I=\tilde{S}-p(\rho)I.\]
For the sake of simplicity, we will drop the tilde sign below and it will be supposed  that stress tensor $T$ (and $S$) is obtained by taking $\mu(\rho)=t\rho.$ 
\medskip

Suppose that there exists spherically symmetric solution such that
\[\rho(\vec{x},t)=\rho(r,t),\quad {\bf u}(\vec{x},t)=u(r,t)\frac{\vec{x}}{|\vec{x}|},\]
where $r=|\vec{x}|$.
Then the gravitational force is given by 
\[{\bf f}(\vec{x},t)=f(r,t)\frac{\vec{x}}{|\vec{x}|},\]
where
\[f(r,t)=-\frac{1}{r^2}g(r,t),\quad g(r,t)=\int_0^r s^2 \rho(s,t)ds.\]

Spherically symmetric solution to  (\ref{3Dmodel}) is solution $(\rho(r,t),u(r,t))$ to the system
\begin{equation}\label{rad_sys}
\begin{split}
\partial_t \rho+\partial_r(\rho u)+\frac{2}{r}\rho u &=0\\
\partial_t(\rho u)+\partial(\rho u^2)+\frac{2}{r}\rho u^2+\partial_r p(\rho)&=2t \Big(\partial_r \rho \partial_r u+\rho\partial_r(\partial_r u+\frac{2}{r}u)\Big)-\frac{1}{r^2}g(r,t)\rho.
\end{split}
\end{equation}

The paper consists of two  parts. After introducing reader with the system and its physical background,  the authors focus on finding the collapsing spherically symmetric solution in the form of shadow wave in the first part of the paper. That includes  modeling of singularities around the origin and finding the conditions  spherically symmetric solution has to satisfy in the outer layers. In the second part of the paper, we deal with the special case when the appropriate solution in the outer layer is also self-similar. After derivation of ODEs system that such  solution has to satisfy, its behavior has been analyzed and it has been proved that the self-similar solution is bounded in the outer layers. 

\medskip

\section{Collapsing solution in the form of shadow wave}

 As explained above, the gravitational collapse occurs when the gravitational force overpowers the pressure gradient in the outer layers which causes the matter to accumulate around the  center of gravity. 
 \begin{itemize}
 \item[-] In the absence of pressure ($A=0$), the gravitational force will draw the matter inwards to the center of gravity without  resistance caused by pressure  which acts in the opposite direction. 
 \item[-] It has been  proved that there exists gravitational collapse solution in the isothermal case ($A=1$) for the systems without  density  dependent viscosity term, so it is expected that it is possible to find such a solution when density dependent viscosity term is present in the momentum equation.
 \end{itemize}

Thus, our aim is to prove that there exists spherically symmetric  solution to this problem which is unbounded in the center of the gravity  and it can be approximated by a  shadow wave
\begin{equation}\label{sdw}
(\rho(r,t),u(r,t))=
\begin{cases}
(\rho_\varepsilon(t), u_\varepsilon(t)), & r<\varepsilon t,\\
(\rho_1(r,t),u_1(r,t)), & r>\varepsilon t,
\end{cases}\end{equation}
where $\rho_\varepsilon(t)\to \infty$ as $\varepsilon\to 0$, and $(\rho_1(r,t),u_1(r,t))$ is spherically symmetric solution on its domain such that 
\begin{equation}\label{cd1}
\lim_{\varepsilon\to 0}\frac{\rho_1(\varepsilon t,t)}{\rho_\varepsilon(t)}=0.
\end{equation}
Assumption (\ref{cd1}) is physical since it requires that density is higher in the core than in the outer layers.

The space-time	conic set $|\vec{x}|<\varepsilon t$,
 $\varepsilon>0$ is called the core. It is a self-similar approximation of the origin which is the center of gravity in this model.

 This problem reduces to finding the functions $\rho_\varepsilon(t)$ and $u_\varepsilon(t)$ such that (\ref{sdw}) is approximate spherically symmetric solution to the system (\ref{3Dmodel}). Shadow wave given by (\ref{sdw}) is approximate weak solution to (\ref{3Dmodel}) if the following holds for every test function $\varphi\in C_0^\infty(\mathbb{R}^3\times(0,\infty))$
\begin{equation*}
\begin{split}
I&:=-\int_{0}^\infty \int_{\mathbb{R}^3} (\rho \partial_t \varphi+\rho{\bf u}\cdot \nabla\varphi)dVdt=\mathcal{O}(\varepsilon),\\
J_i&:=-\int_{0}^\infty\int_{\mathbb{R}^3}(\rho u_i\partial_t\varphi+\rho u_i{\bf u}\cdot\nabla \varphi)-T_i\cdot \nabla \varphi+\rho f_i\varphi)dVdt=\mathcal{O}(\varepsilon),\,i=1,2,3,
\end{split}
\end{equation*}
when $\varepsilon\to 0$. 

For the first equation we have
\[\begin{split}
I=&-\int_0^\infty \int_{|\vec{x}|<\varepsilon t} (\rho_\varepsilon\partial_t \varphi+{\bf m}_\varepsilon\cdot \nabla \varphi)dVdt
-\int_0^\infty \int_{|\vec{x}|>\varepsilon t} (\rho_1\partial_t \varphi+{\bf m}_1\cdot \nabla\varphi)dVdt\\
=&\int_0^\infty\int_{|\vec{x}|<\varepsilon t} \partial_t \rho_\varepsilon\varphi dVdt+\int_0^\infty \int_{|\vec{x}|=\varepsilon t}(\rho_1-\rho_\varepsilon)\nu_t \varphi dSdt\\
& + \int_0^\infty \int_{|\vec{x}|=\varepsilon t}({\bf m}_1-{\bf m}_\varepsilon)\cdot\vec{\nu}_x \varphi dSdt+\int_0^\infty \int_{|\vec{x}|>\varepsilon t} (\underbrace{\partial_t\rho_1 +\nabla \cdot {\bf m}_1 }_{=0})\varphi dVdt,
\end{split}\]
where ${\bf m}=\rho {\bf u}$ is momentum variable, while $\vec{\nu}=(\vec{\nu}_x, \nu_t)$ is normal to the SDW front $|\vec{x}-\varepsilon t|=0$ and we have
\[\vec{\nu}_x=\frac{\vec{x}}{|\vec{x}|}\frac{1}{\sqrt{1+\varepsilon^2}},\quad \nu_t=-\frac{\varepsilon}{\sqrt{1+\varepsilon^2}}.\]
To simplify notation, take 
\[\vec{\omega}=\frac{\vec{x}}{|\vec{x}|}.\]
Then
\[{\bf m}(\vec{x},t)=m(r,t)\vec{\omega}, \quad {\bf u}(\vec{x},t)=u(r,t)\vec{\omega}.\]

Since the volume of the ball of radius $\varepsilon t$ is $\frac{4}{3}\pi (\varepsilon t)^3\sim \varepsilon^2$ as $\varepsilon\to 0$ and $\varphi\in C_0^\infty$, we have
\begin{equation}\label{neglect_nabla}
\Big|\int_{|\vec{x}|<\varepsilon t} \nabla \varphi(\vec{0},t)\cdot \vec{x}dV\Big|\leq \int_{|\vec{x}|<\varepsilon t} |\nabla \varphi(\vec{0},t)||\vec{x}|dV\sim \varepsilon^4 \quad \text{as } \varepsilon\to 0,
\end{equation}
so
\[\begin{split}
I_1:=\int_0^\infty\int_{|\vec{x}|<\varepsilon t} \partial_t \rho_\varepsilon(t)\varphi dVdt &\approx \int_0^\infty\int_{|\vec{x}|<\varepsilon t} \partial_t \rho_\varepsilon(t)\big(\varphi(\vec{0},t)+\nabla\varphi(\vec{0},t)\cdot \vec{x}\big) dVdt\\
&= \int_0^\infty\int_{|\vec{x}|<\varepsilon t} \partial_t \rho_\varepsilon(t)\varphi(\vec{0},t) dVdt+O(\varepsilon).
\end{split}\]
$I_1$ will converge if $\rho_\varepsilon(t)=O(\varepsilon^{-3})$. So let us suppose that 
\begin{equation*}\label{assum1}
\rho_\varepsilon(t)=\alpha(t) \varepsilon^{-3},
\end{equation*}
 with $\alpha(t)$ being smooth positive function.
 If $\rho_\varepsilon(t)=o(\varepsilon^{-3})$, then the core mass would disappear, $M(t)=\int_{|\vec{x}|<\varepsilon t} \rho_{\varepsilon}(t)dV\to 0$ as $\varepsilon\to 0$.
So, $I_1$ reduces to 
\[I_1=\int_0^\infty\int_{|\vec{x}|<\varepsilon t} \partial_t (\alpha(t) \varepsilon^{-3})\varphi(\vec{0},t) dVdt+O(\varepsilon)=\frac{4\pi}{3}\int_0^\infty \alpha'(t)t^3\varphi(\vec{0},t)dt+O(\varepsilon).\]
Further, denote by
\[\begin{split}
I_2&:= \int_0^\infty \int_{|\vec{x}|=\varepsilon t}(\rho_1(\vec{x},t)-\rho_\varepsilon(t))\nu_t \varphi(\vec{x},t) dSdt,\\
I_3 &:= \int_0^\infty \int_{|\vec{x}|=\varepsilon t}({\bf m}_1(\vec{x},t)-{\bf m}_\varepsilon(t))\cdot\vec{\nu}_x \varphi(\vec{x},t) dSdt.
\end{split}\]
Using that $\nu_t\approx -\varepsilon$ as $\varepsilon\to 0$ and the assumption on $\rho_\varepsilon(t)$ given above, we obtain
\[\begin{split}
I_2&= -\int_0^\infty \int_{|\vec{x}|=\varepsilon t} (\rho_1(\varepsilon t,t)-\alpha(t)\varepsilon^{-3})\varepsilon(\varphi(\vec{0},t)+\nabla \varphi(\vec{0},t)\cdot \vec{x})dSdt +O(\varepsilon)\\
& = -\int_0^\infty \int_{|\vec{x}|=\varepsilon t} (\rho_1(\varepsilon t,t)-\alpha(t)\varepsilon^{-3})\varepsilon \varphi(\vec{0},t)dSdt +O(\varepsilon).
\end{split}\]
We have used the same kind of reasoning as in (\ref{neglect_nabla}) to neglect the term next to $\nabla \varphi(\vec{0},t)$ above.
The surface area of the ball of radius $\varepsilon t$  is equal to $ 4\pi(\varepsilon t)^2$ and due to condition (\ref{cd1}) (which implies that $\rho_1(\varepsilon t,t)\sim \varepsilon^{-c}$, $c<3$ as $\varepsilon\to 0$), the following holds
\[I_2\approx -4\pi \int_0^\infty (\varepsilon t)^2 \varepsilon (\rho_1(\varepsilon t,t)-\alpha(t)\varepsilon^{-3})\varphi(\vec{0},t)dt= 4\pi \int_0^\infty \alpha(t)t^2 \varphi(\vec{0},t)dt =O(\varepsilon).\]
Again, in the similar manner as above,
 we obtain
\[\begin{split}
I_3&\approx \int_0^\infty\int_{|\vec{x}|=\varepsilon t}(m_1(\varepsilon t,t)-m_\varepsilon(t)) \vec{\omega}\cdot \vec{\nu}_x (\varphi(\vec{0},t)+\nabla \varphi(\vec{0},t)\cdot \vec{x})\,dSdt\\
&\approx  \int_0^\infty\int_{|\vec{x}|=\varepsilon t}(m_1(\varepsilon t,t)-m_\varepsilon(t))  \varphi(\vec{0},t)\,dSdt\\
&=4\pi \int_0^\infty (\varepsilon t)^2 (m_1(\varepsilon t,t)-m_\varepsilon(t))  \varphi(\vec{0},t)\,dt +O(\varepsilon),
\end{split},\]
where we have used that
\[\vec{\omega}\cdot \vec{\nu}_x\approx \vec{\omega}\cdot \vec{\omega}=|\vec{\omega}|^2=1 \quad \text{ as } \varepsilon\to 0.\]
The integral $I_3$ will converge if
\begin{equation}\label{assum2}
m_1(\varepsilon t, t)-m_\varepsilon(t)=u_1(\varepsilon t,t)\rho_1(\varepsilon t, t)-u_\varepsilon(t)\rho_\varepsilon(t)=O(\varepsilon^{-2}),\quad \varepsilon \to 0.
\end{equation}
In that case we get
\[I=I_1+I_2+I_3=\int_0^\infty 4\pi\Big(\frac{1}{3} \alpha'(t)t^3+\alpha(t)t^2+\varepsilon^2t^2 \big(m_1(\varepsilon t,t)-m_\varepsilon(t)\big)\Big)\varphi(\vec{0},t)+O(\varepsilon).\]
Hence,
$I\approx 0$ as $\varepsilon\to 0$ if
\[\lim_{\varepsilon\to 0}\Big(\frac{1}{3} \alpha'(t)t+\alpha(t)+\varepsilon^2 \big(m_1(\varepsilon t,t)-m_\varepsilon(t)\big)\Big)=0.\]
The limit
\[\lim_{\varepsilon\to 0} \varepsilon^2 m_\varepsilon(t)=\lim_{\varepsilon\to 0} \varepsilon^2 \rho_\varepsilon(t)u_\varepsilon(t)\]
exists if 
\[u_\varepsilon(t)\sim \varepsilon^\gamma, \; \gamma\geq 1 \quad\text{ as } \varepsilon\to 0\]
since $\rho_\varepsilon(t)\sim \varepsilon^{-3}$.
On the other side, it is physically reasonable to assume that the velocity is bounded, i.e.\ $u_1(\varepsilon t,t)=\mathcal{O}(1)$, so the limit
\[\lim_{\varepsilon\to 0} \varepsilon^2 m_1(\varepsilon t,t)=\lim_{\varepsilon\to 0} \varepsilon^2 u_1(\varepsilon t,t)\rho_1(\varepsilon t,t)\]
exists if $\rho_1(\varepsilon t,t)=O(\varepsilon^{-2})$. Note that such assumption does not contradict condition (\ref{cd1}). Thus, 
\begin{equation}\label{assum1}
\rho_\varepsilon(t)=\alpha(t)\varepsilon^{-3},\quad u_\varepsilon(t)=\beta(t)\varepsilon^\nu,\,\nu\geq 1,
\end{equation}
with $\alpha(t),\beta(t)$ being  smooth functions such that $\alpha(t)> 0$ and $\beta(t)$ is bounded,
and  
\begin{equation}\label{assum3}
\rho_1(\varepsilon t,t)=O(\varepsilon^{-2}),\quad u_1(\varepsilon t,t)=O(1)
\end{equation}
as $\varepsilon \to 0.$
The integral $I$ will  converge if the following holds
\[0= \begin{cases}
\frac{1}{3} \alpha'(t)t+\alpha(t)+\lim_{\varepsilon\to 0}\varepsilon^2 \rho_1(\varepsilon t,t)u_1(\varepsilon t,t), & \text{ if } \nu>1\\
\frac{1}{3} \alpha'(t)t+\alpha(t)(1-\beta(t))+\lim_{\varepsilon\to 0}\varepsilon^2 \rho_1(\varepsilon t,t)u_1(\varepsilon t,t), & \text{ if } \nu=1
\end{cases}.\]
The sign of $\beta(t)$ determines the sign of velocity $u_\varepsilon$ in the core. Namely, if $\beta(t)<0$, the velocity $u_\varepsilon(t)$ is also negative, which is condition necessary for gravitational collapse to occur. In the case $\nu>1$,  $\beta(t)$ is missing in the above equality, so the behavior of $\beta(t)$ with respect to time does not have any influence on $\alpha(t)$ and vice versa.   On the other side, the assumption $\nu=1$  distinguishes two cases: inflow ($\beta<0$) and outflow ($\beta>0$) of matter from the core.

\begin{remark}
If $\lim\limits_{\varepsilon\to 0}\varepsilon^2 \rho_1(\varepsilon t,t)u_1(\varepsilon t,t)=0$, then we have
\[\frac{1}{3} \alpha'(t)t+\alpha(t)(1-\beta(t))=0\]
for $\nu=1.$
So, if the velocity in  the core is negative, i.e.\ $\beta(t)\leq 0$, then $\alpha'(t)<0$, and the function $\alpha(t)$ decreases with $t$.  If $\beta(t)=0$ for each $t$, then $\alpha(t)$ solves
\[\frac{1}{3}\alpha'(t)t+\alpha(t)=0,\]
so $\alpha(t)=t^{-3}$
and the mass in the core is constant,
\[M_0=\int_{|\vec{x}|<\varepsilon t} \alpha(t)\varepsilon^{-3}dV=\frac{4}{3}\alpha(t)t^3\pi =\frac{4}{3}\pi.\]

\end{remark}

\medskip

Let us now determine conditions on functions in (\ref{sdw})  to get $J_i\approx 0$, $i=1,2,3$ as $\varepsilon\to 0$.

Having in mind that astronomical body in hydrostatic equilibrium (gravitational force is balanced by internal pressure) will have spherical shape and that any deviations from sphericity would violate the state of hydrostatic equilibrium, it is reasonable to assume that 
\[\nabla p=\rho {\bf f}\]
holds in the ball of radius $\varepsilon t.$ In such a way, the shadow waves are used to model post-core formation stages, when the sphere is initially in hydrostatic equilibrium and the collapse starts in the core. This interpretation is in agreement with result obtained by Shu in \cite{shu}. 

\begin{remark}
The authors in \cite{mn_ober2018} proved the existence of shadow wave solution near the origin for $3\times 3$ compressible Euler system with non-vanishing pressure,
\[p(\rho,e)=\tilde{\kappa}\rho e,\quad \tilde{\kappa}\in(0,2)\]
and ${\bf f}=\vec{0}$.
Therefore, the results obtained here can be generalized for the $3\times 3$ compressible Euler systems. That is left for future work. 
\end{remark}

\noindent
Now, 
\begin{equation}\label{Ji}
\begin{split}
J_i =&\int_0^\infty\int_{|\vec{x}|< \varepsilon t} \partial_t(\rho_\varepsilon(t) u^i_\varepsilon(t))\varphi(\vec{x},t)\,dVdt\\
&+ \int_0^\infty\int_{|\vec{x}|< \varepsilon t} \div(\rho_\varepsilon(t)u^i_\varepsilon(t){\bf u}_\varepsilon(t))\varphi(\vec{x},t)\,dVdt\\
&-\int_0^\infty\int_{|\vec{x}|< \varepsilon t} \big(\div S_\varepsilon^i\underbrace{- \partial_{x_i}p_\varepsilon(t)+\rho_\varepsilon(t)  f_i}_{=0}\big) \varphi(\vec{x},t)\,dV dt\\
&+\int_0^\infty\int_{|\vec{x}|= \varepsilon t} (\rho_1(\varepsilon t,t)u_1^i(\varepsilon t,t)-\rho_\varepsilon(t)u_\varepsilon(t))\nu_t \varphi(\vec{x},t)\,dSdt\\
&+\int_0^\infty\int_{|\vec{x}|= \varepsilon t} (\rho_1(\varepsilon t,t)u_1^i(\varepsilon t,t){\bf u_1}(\varepsilon t,t)-\rho_\varepsilon(t)u_\varepsilon(t){\bf u}_\varepsilon(t))\cdot \vec{\nu}_x \varphi(\vec{x},t)\,dSdt\\
&- \int_0^\infty\int_{|\vec{x}|= \varepsilon t} (S^i_1(\varepsilon t,t)-S^i_\varepsilon(t))\cdot\vec{\nu}_x \varphi(\vec{x},t)\,dSdt\\
&+ \int_0^\infty\int_{|\vec{x}|>\varepsilon t}\big(\underbrace{\partial_t(\rho_1u_1^i)+\div(\rho_1u_1^i{\bf u}_1)-\div T^i_1-\rho f_i}_{=0}\big)\varphi\,dVdt,
\end{split}
\end{equation}
where $u^i$ and $T^i$, $i=1,2,3$ are $i-$th components of the vector ${\bf u}$ and tensor $T$, respectively. To simplify notation, $T_1^i$ is obtained by replacing $u^i$ with $u_1^i$ in 
\[T^i=\frac{1}{2}\rho(\nabla {\bf u}+\nabla {\bf u}^t)_i-p(\rho)\vec{e}_i=S^i-p(\rho)\vec{e}_i,\]
while $T_\varepsilon^i$ is obtained by replacing $u^i$ with $u_\varepsilon^i$ in the same term. The standard base in $\mathbb{R}^3$ is denoted by $\{\vec{e}_1,\vec{e}_2,\vec{e_3}\}$  ($\vec{e}_1=(1,0,0)$, etc.). It is clear that $\rho_\varepsilon(t)$ and ${\bf u}_\varepsilon(t)$ do not depend on $\vec{x}$, so the second and the third integral in (\ref{Ji}) are equal to zero. Let us write $J_i$ as a sum of four integrals,
\[J_i=J_i^1+J_i^2+J_i^3+J_i^4, \quad i=1,2,3,\]
where
\[\begin{split}
J_i^1&:=\int_0^\infty\int_{|\vec{x}|< \varepsilon t} \partial_t(\rho_\varepsilon(t) u^i_\varepsilon(t))\varphi(\vec{x},t)\,dVdt,\\
J_i^2&:=\int_0^\infty\int_{|\vec{x}|= \varepsilon t} (\rho_1(\varepsilon t,t)u_1^i(\varepsilon t,t)-\rho_\varepsilon(t)u_\varepsilon(t))\nu_t \varphi(\vec{x},t)\,dSdt,\\
J_i^3&:=\int_0^\infty\int_{|\vec{x}|= \varepsilon t} (\rho_1(\varepsilon t,t)u_1^i(\varepsilon t,t){\bf u_1}(\varepsilon t,t)-\rho_\varepsilon(t)u_\varepsilon(t){\bf u}_\varepsilon(t))\cdot \vec{\nu}_x \varphi(\vec{x},t)\,dSdt,\\
J_i^4&:=- \int_0^\infty\int_{|\vec{x}|= \varepsilon t} (S^i_1(\varepsilon t,t)-S^i_\varepsilon(t))\cdot\vec{\nu}_x \varphi(\vec{x},t)\,dSdt.
\end{split}\]
Note that the integrals $J_i^k$  can be obtained from $I_k,$ $k=1,2,3$ by replacing $\rho$ with $\rho u^i$. That makes our analysis easier, since the results obtained above can be used here. Thus, replacing $\rho_\varepsilon$ with $\rho_\varepsilon u_\varepsilon^i$ in $I_1$ and using that ${\bf u}^i_\varepsilon(t)=u_\varepsilon(t)\frac{x_i}{|\vec{x}|}$ (or ${\bf u}_\varepsilon(t)=u_\varepsilon(t)\vec{\omega}$), we obtain
\[\begin{split}
J_i^1&=\int_0^\infty\int_{|\vec{x}|<\varepsilon t} \partial_t (\rho_\varepsilon(t)u^i_\varepsilon(t))\varphi(\vec{0},t) dVdt+O(\varepsilon)\\
&=\int_0^\infty\int_{|\vec{x}|<\varepsilon t} \partial_t (\rho_\varepsilon(t)u_\varepsilon(t))\frac{x_i}{|\vec{x}|}\varphi(\vec{0},t) dVdt+O(\varepsilon)\\
&\leq \frac{4\pi}{3}\int_0^\infty \partial_t(\alpha(t)\beta(t))t^3\varepsilon^\nu\varphi(\vec{0},t)dt+O(\varepsilon).
\end{split}\]
Since $\nu\geq 1,$ we conclude that $J_i^1$ is negligible for small $\varepsilon$, i.e.\ $J_i^1=O(\varepsilon)$ as $\varepsilon\to 0$.

\medskip


\medskip

Similarly, we have
\[\begin{split}
J_i^2&=\int_0^\infty \int_{|\vec{x}|=\varepsilon t}(\rho_1(\vec{x},t)u_1^i(\vec{x},t)-\rho_\varepsilon(t)u_\varepsilon^i(t))\nu_t \varphi(\vec{x},t) dSdt\\
&=\int_0^\infty \int_{|\vec{x}|=\varepsilon t} (m_1(\varepsilon t,t)-m_\varepsilon(t))\omega_i\nu_t\varphi(\vec{0},t)\,dSdt+O(\varepsilon).
\end{split}\]
$J_i^2=\mathcal{O}(\varepsilon)$ follows from $m_1(\varepsilon t,t)-m_\varepsilon(t)=O(\varepsilon^{-2})$ and the fact that $\nu_t\sim -\varepsilon$ as $\varepsilon\to 0$.
Concerning the third non-zero integral in $J_i$, we have 
\[\begin{split}
J_i^3&=\int_0^\infty\int_{|\vec{x}|= \varepsilon t} (\rho_1(\varepsilon t,t)u_1^i(\varepsilon t,t){\bf u_1}(\varepsilon t,t)-\rho_\varepsilon(t)u_\varepsilon(t){\bf u}_\varepsilon(t))\cdot \vec{\nu}_x \varphi(\vec{x},t)\,dSdt\\
&= \int_0^\infty \int_{|\vec{x}|=\varepsilon t} (m_1(\varepsilon t,t)u_1(\varepsilon t,t)-m_\varepsilon(t)u_\varepsilon(t))\omega_i\underbrace{\vec{\omega}\cdot \vec{\nu}_x}_{\sim 1,\,\varepsilon\to 0}\varphi(\vec{0},t)\,dSdt+O(\varepsilon).
\end{split}\]
The assumptions (\ref{assum1}) imply $m_\varepsilon(t)u_\varepsilon(t)=\alpha(t)\beta^2(t)\varepsilon^{2\nu-3},$ $\nu\geq 1$ and
\[\int_0^\infty \int_{|\vec{x}|=\varepsilon t} m_\varepsilon(t)u_\varepsilon(t)\omega_i\vec{\omega}\cdot \vec{\nu}_x\varphi(\vec{0},t)\,dSdt=O(\varepsilon^{2\nu})\to 0 \quad \text{ as } \varepsilon\to 0,\]
so $J_i^3$ converges if
\begin{equation}\label{assumpp}
m_1(\varepsilon t,t)u_1(\varepsilon t,t)=O(\varepsilon^{-2}), \quad \varepsilon\to 0.
\end{equation}
 The condition (\ref{assumpp}) is satisfied if (\ref{assum3}) is. Thus, we have
\[J_i^3=\int_0^\infty \int_{|\vec{x}|=\varepsilon t} m_1(\varepsilon t,t)u_1(\varepsilon t,t)\omega_i \varphi(\vec{0},t)\,dSdt+O(\varepsilon).\]
 All components of  the viscous stress tensor $S_\varepsilon(t)$ (on the SDW front $|\vec{x}|=\varepsilon t$  are equal to zero due to the fact that ${\bf u}_\varepsilon(t)$ depends only on $t$. Hence, the integral $J_i^4$ reduces to
\[J_i^4=-\int_0^\infty \int_{|\vec{x}|=\varepsilon t} S_1^i\cdot \vec{\nu}_x\varphi(\vec{x},t)\,dSdt.\]
We have
\[\partial_{x_j}u^i(\vec{x},t)=\omega_i \omega_j \partial_r u(r,t)-\omega_i\omega_j\frac{1}{r}u(r,t)+\begin{cases}
\frac{1}{r}u(r,t), & \text{ if } i=j\\
0, & \text{ if } i\neq j,
\end{cases}\]
where $w_i=\frac{x_i}{|\vec{x}|}$ denotes the $i-$th component of the vector $\vec{\omega}=\frac{\vec{x}}{r}$, $r=|\vec{x}|$ and $u(\vec{x},t)=u(r,t)\vec{\omega}.$
Then
\[\begin{split}
S^i&=t\rho \frac{1}{2}\big(\nabla{\bf u}+\nabla {\bf u}^t\big)_i\\
&=t\rho\Big(\omega_i\big(\partial_r u(r,t)-\frac{1}{r}u(r,t)\big)\vec{\omega}+\frac{1}{r}u(r,t)\vec{e_i}\Big).
\end{split}\]

Hence,
\[\begin{split}
J_i^4=&-\int_0^\infty \int_{|\vec{x}|=\varepsilon t} t\rho_1(\varepsilon t,t)\omega_i\big(\partial_r u_1(r,t)-\frac{1}{r}u_1(r,t)\big)\vec{\omega}\cdot \vec{\nu}_x \varphi(\vec{0},t)\,dSdt\\
&- \int_0^\infty \int_{|\vec{x}|=\varepsilon t} t\rho_1(\varepsilon t,t)\frac{1}{r}u_1(\varepsilon t,t)\vec{e_i}\cdot \vec{\nu}_x\varphi(\vec{0},t)\,dSdt\\
&=-\int_0^\infty \int_{|\vec{x}|=\varepsilon t} \rho_1(\varepsilon t,t)\omega_i t\partial_r u_1(r,t) \varphi(\vec{0},t)\,dSdt+O(\varepsilon),
\end{split}
\]
since $\vec{\omega}\cdot \vec{\nu}_x\approx 1$ and $\vec{e}_i\cdot \vec{\nu}_x\approx \omega_i$ as $\varepsilon\to 0.$ It follows that $J_i^4$ converges if 
\begin{equation}\label{assum4}
\rho_1(\varepsilon t,t)t\partial_r u_1(r,t)=O(\varepsilon^{-2}),\quad \varepsilon\to 0.
\end{equation}
Thus, a shadow wave given by (\ref{sdw}) is an approximate spherically symmetric  solution to the system (\ref{3Dmodel}) if the assumptions (\ref{assum1}), (\ref{assum3}), (\ref{assum4}) are satisfied together with
\[J_i^3+J_i^4=\int_0^\infty \int_{|\vec{x}|=\varepsilon t} \rho_1(\varepsilon t,t)\big(u_1^2(\varepsilon t,t)-t\partial_r u_1(r,t)\big) \omega_i \varphi(\vec{0},t)\,dSdt=O(\varepsilon).\]
The above relation holds if 
\[\lim_{\varepsilon\to 0}\big(u_1^2(\varepsilon t,t)-t\partial_r u_1(\varepsilon t,t)\big)=0.\]
The assumption $u_1(\varepsilon t,t)=O(1)$  implies 
\begin{equation}\label{fake_u_1}
u_1^2(\varepsilon t,t)\approx t\partial_r u_1(\varepsilon t,t)=O(1),\quad \varepsilon\to 0.\end{equation}

If $u_1$ is self-similar solution such that $u_1(r,t)=V(y),$ where $y=\frac{r}{t}$ (see Section \ref{sec:ss}), then (\ref{fake_u_1}) implies

\[V^2(\varepsilon)-t\frac{1}{t}V'(\varepsilon)=V^2(\varepsilon)-V'(\varepsilon)\approx 0,\]
which is not physically reasonable assumption for gravitational collapse, since the equality (\ref{fake_u_1}) implies that  $\partial_r u_1(\varepsilon t,t)$ is non-negative near the core and it does not model inflow of matter towards the center of gravity. 

\medskip

Thus, to obtain physically reasonable spherically symmetric collapsing solution
 to the system (\ref{NDmodel}) one has to impose stronger assumptions on  $u_1(r,t)$ and $\rho_1(r,t)$ at $r=\varepsilon t$. Namely, if we additionally make assumption that 
$
u_1(\varepsilon t,t)=-O(\varepsilon) $ and $ \partial_r u_1(\varepsilon t,t)=-O(1),$
then $J_i^3=O(\varepsilon)$ as $\varepsilon\to 0$. As a consequence, $J_i= O(\varepsilon)$  if and only if $J_i^4=O(\varepsilon)$ as $\varepsilon\to 0$. That implies 
$
\rho_1(\varepsilon t,t)=O(\varepsilon^{-1}),
$ $\varepsilon\to 0.$
\begin{theorem}\label{thm:sdw}
Let $(\rho_1(r,t),u_1(r,t))$ be  solution to the system (\ref{rad_sys}) for $r>\varepsilon t$. The shadow wave solution given by 
\begin{equation}\label{sdw1}
(\rho(r,t),u(r,t))=
\begin{cases}
(\rho_\varepsilon(t), u_\varepsilon(t)), & r<\varepsilon t,\\
(\rho_1(r,t),u_1(r,t)), & r>\varepsilon t,
\end{cases}\end{equation}
is approximate spherically symmetric solution to the system (\ref{3Dmodel}) when $\varepsilon\to 0$ if the following assumptions hold:
\begin{enumerate}
\item The state $(\rho_\varepsilon(t),u_\varepsilon(t))$ is given by  
\begin{equation}\label{ass_vareps}
\rho_\varepsilon(t)=\alpha(t)\varepsilon^{-3},\quad u_\varepsilon(t)=\beta(t)\varepsilon^\nu,\,\nu\geq 1,\quad \varepsilon\to 0,
\end{equation}
where $\alpha(t)>0$ and  $\beta(t)$ are smooth functions such that
\[\frac{t}{3}\alpha'(t)+\alpha(t)(1-\beta(t))=0\]
holds if $\nu=1$ and 
\[\frac{t}{3}\alpha'(t)+\alpha(t)=0\] 
holds if $\nu>1.$  The mass in the core is constant if $\nu>1$.
\item The velocity $u_1(r,t)$ satisfies 
\begin{equation}\label{assum5}
u_1(\varepsilon t,t)=O(\varepsilon), \quad \partial_r u_1(\varepsilon t,t)=O(1),\quad \varepsilon\to 0\end{equation}
while the density $\rho_1(r,t)$ satisfies
\begin{equation}\label{assum6}
\rho_1(\varepsilon t,t)=O(\varepsilon^{-1}),\quad \varepsilon\to 0.
\end{equation}
\end{enumerate}
\end{theorem}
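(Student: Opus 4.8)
The plan is to prove the theorem by direct verification: the statement merely packages the conditions under which the two defects $I$ and $J_i$ from the weak formulation are $O(\varepsilon)$, and nearly every ingredient has already been computed in the paragraphs preceding the statement. So I would organize the proof as an assembly of those computations, checking only that the strengthened hypotheses \eqref{assum5}--\eqref{assum6} feed consistently into each piece. Throughout I would use that the product of the core volume $\tfrac{4}{3}\pi(\varepsilon t)^3$ with the peak density $\rho_\varepsilon\sim\varepsilon^{-3}$ is $O(1)$, so that the linear correction $\nabla\varphi(\vec 0,t)\cdot\vec x$ always carries an extra $|\vec x|=O(\varepsilon)$, exactly as in \eqref{neglect_nabla}; this legitimizes replacing $\varphi(\vec x,t)$ by $\varphi(\vec 0,t)$ on both the core and the front at the cost of $O(\varepsilon)$.

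First I would treat the continuity defect $I$. Splitting $I=I_1+I_2+I_3$ after integration by parts, the bulk integral over $|\vec x|>\varepsilon t$ vanishes because $(\rho_1,u_1)$ solves $(\ref{rad_sys})_1$ there. Inserting the core scaling \eqref{ass_vareps}, the term $I_1$ contributes $\tfrac{4\pi}{3}\int_0^\infty \alpha'(t)t^3\varphi(\vec 0,t)\,dt$ up to $O(\varepsilon)$; the surface term $I_2$, using $\nu_t\approx-\varepsilon$ and the surface measure $4\pi(\varepsilon t)^2$, contributes $4\pi\int_0^\infty\alpha(t)t^2\varphi(\vec 0,t)\,dt$; and $I_3$ contributes the piece carrying $\varepsilon^2(m_1-m_\varepsilon)$. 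The only new point is that under \eqref{assum5}--\eqref{assum6} one has $\varepsilon^2\rho_1(\varepsilon t,t)u_1(\varepsilon t,t)=O(\varepsilon^2)\to 0$, so the $I_3$ contribution collapses to the $\beta$-dependent term for $\nu=1$ and disappears for $\nu>1$. The coefficient of $\varphi(\vec 0,t)$ in $I_1+I_2+I_3$ is then exactly $\tfrac{t}{3}\alpha'+\alpha(1-\beta)$ (for $\nu=1$) or $\tfrac{t}{3}\alpha'+\alpha$ (for $\nu>1$), and the ODE hypotheses force it to vanish, yielding $I=O(\varepsilon)$.

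Next I would treat the momentum defects, writing $J_i=J_i^1+J_i^2+J_i^3+J_i^4$. The two core volume integrals drop out: the first because $(\rho_\varepsilon,u_\varepsilon)$ is independent of $\vec x$, the second because hydrostatic equilibrium $\nabla p=\rho{\bf f}$ is imposed in the core; the outer bulk term vanishes by $(\ref{rad_sys})_2$. I would then bound the four surviving terms. $J_i^1=O(\varepsilon)$ is immediate from $\nu\geq 1$. $J_i^2=O(\varepsilon)$ follows from the momentum jump $m_1-m_\varepsilon=O(\varepsilon^{-2})$ together with $\nu_t\sim-\varepsilon$. For the convective term $J_i^3$, the strengthened assumption $u_1(\varepsilon t,t)=O(\varepsilon)$ gives $m_1u_1=\rho_1u_1^2=O(\varepsilon^{-1})O(\varepsilon^2)=O(\varepsilon)$, and against the surface measure $\sim\varepsilon^2$ this yields $J_i^3=O(\varepsilon)$. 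Finally, using the explicit form of $S^i$ and the limits $\vec\omega\cdot\vec\nu_x\to 1$, $\vec e_i\cdot\vec\nu_x\to\omega_i$, the viscous term reduces to the integral of $\rho_1\omega_i\,t\,\partial_r u_1$ over the front; with $\rho_1=O(\varepsilon^{-1})$ and $\partial_r u_1=O(1)$ its integrand is $O(\varepsilon^{-1})$, so $J_i^4=O(\varepsilon)$. Hence $J_i=O(\varepsilon)$ for each $i$.

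The main obstacle---really the only delicate point---is uniform bookkeeping of the error terms, and in particular confirming that the hypotheses are mutually consistent rather than over-constraining. The decisive check is that the single density bound $\rho_1(\varepsilon t,t)=O(\varepsilon^{-1})$ in \eqref{assum6} is simultaneously what $J_i^4$ forces and what \eqref{assum5} permits in $J_i^3$, while at the same time being admissible in $I$ (where it only makes $\varepsilon^2\rho_1 u_1\to 0$ faster and therefore preserves the ODE structure). Verifying that these requirements coincide---and that the normal-vector approximations $\vec\omega\cdot\vec\nu_x\approx 1$ and $\vec e_i\cdot\vec\nu_x\approx\omega_i$ together with the Taylor truncation of $\varphi$ introduce no lower-order obstruction on the front---is what ties the two sets of conditions together and completes the verification.
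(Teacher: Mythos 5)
Your proposal is correct and follows essentially the same route as the paper: the theorem is precisely a packaging of the computations preceding it, with the decomposition $I=I_1+I_2+I_3$, $J_i=J_i^1+J_i^2+J_i^3+J_i^4$, the hydrostatic-equilibrium cancellation in the core, and the scalings \eqref{ass_vareps}, \eqref{assum5}, \eqref{assum6} feeding in exactly as you describe (in particular, $\varepsilon^2\rho_1u_1\to 0$ reducing the mass-balance relation to the stated ODEs, $\rho_1u_1^2=O(\varepsilon)$ killing $J_i^3$, and $\rho_1\,t\,\partial_r u_1=O(\varepsilon^{-1})$ against the surface measure giving $J_i^4=O(\varepsilon)$). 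No gaps relative to the paper's own (formal) level of rigor.
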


\medskip

Theorem \ref{thm:sdw} gives the conditions that spherically symmetric solution $(\rho_1,u_1)$  has to satisfy on $|\vec{x}|=\varepsilon t$, but it does not require that such a solution is self-similar. However, the assumption  $\mu(\rho)=t\rho$ is introduced to secure existence of self-similar solution.


\medskip

The above procedure can be applied to the system (\ref{3Dmodel}) without viscosity term ($\mu(\rho)=0$). In that case, the result obtained from the conservation of mass equation is still valid, while we have $J_i^4=0$. Thus, the shadow wave solution in the form (\ref{sdw}) exists if the intermediate state $(\rho_\varepsilon(t), u_\varepsilon(t))$ satisfies (\ref{ass_vareps}) and $J_i^3=O(\varepsilon)$,  i.e.\ 
\[\rho_1(\varepsilon t,t)u_1^2(\varepsilon t,t)=O(\varepsilon^{-1}),\quad\varepsilon\to 0.\] 
So, the result is similar in the case $\mu(\rho)=0$, but now with weaker assumption on $u_1$ at $|\vec{x}|=\varepsilon t$.

\begin{theorem}\label{thm:sdw0}
Let $(\rho_1(r,t),u_1(r,t))$ be  solution to the system
\begin{equation*}
\begin{split}
\partial_t \rho+\partial_r(\rho u)+\frac{2}{r}\rho u &=0\\
\partial_t(\rho u)+\partial(\rho u^2)+\frac{2}{r}\rho u^2+\partial_r p(\rho) &=-\frac{1}{r^2}g(r,t)\rho\\
p(\rho)&=A\rho,\quad A\in\{0,1\}.
\end{split}
\end{equation*}
for $r>\varepsilon t$. 
 The shadow wave solution given by 
\begin{equation*}
(\rho(r,t),u(r,t))=
\begin{cases}
(\rho_\varepsilon(t), u_\varepsilon(t)), & r<\varepsilon t,\\
(\rho_1(r,t),u_1(r,t)), & r>\varepsilon t,
\end{cases}\end{equation*}
is approximate  spherically symmetric solution to the system (\ref{3Dmodel}) with $\mu(\rho)=0$  if the following assumptions hold:
\begin{enumerate}
\item The state $(\rho_\varepsilon(t),u_\varepsilon(t))$ is given by  
\begin{equation*}
\rho_\varepsilon(t)=\alpha(t)\varepsilon^{-3},\quad u_\varepsilon(t)=\beta(t)\varepsilon^\nu,\,\nu\geq 1, \quad \varepsilon\to 0,
\end{equation*}
where $\alpha(t)>0$ and  $\beta(t)$ are smooth functions such that
\[\frac{t}{3}\alpha'(t)+\alpha(t)(1-\beta)(t)=0\]
holds if $\nu=1$ and 
\[\frac{t}{3}\alpha'(t)+\alpha(t)=0\] 
holds if $\nu>1$.
\item The solution $(\rho_1(r,t),u_1(r,t))$ satisfies the condition 
\begin{equation}\label{novisc_cd}
\rho_1(\varepsilon t,t)u_1^2(\varepsilon t,t)=O(\varepsilon^{-1}),\quad\varepsilon\to 0.
\end{equation} 
\end{enumerate}

\end{theorem}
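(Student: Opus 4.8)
The plan is to recycle the computation already carried out for Theorem \ref{thm:sdw}, exploiting that the substitution $\mu(\rho)=0$ merely deletes the viscous contributions and leaves every other piece of the weak formulation untouched. I would first dispose of the continuity equation: since the mass balance $(\ref{3Dmodel})_1$ contains no stress tensor, the splitting $I=I_1+I_2+I_3$ and all the estimates giving $I=\mathcal{O}(\varepsilon)$ are word for word those of the viscous case. This already forces the core ansatz (\ref{ass_vareps}), returns the stated ODE for $\alpha(t)$ (keeping the factor $1-\beta(t)$ when $\nu=1$ and dropping it when $\nu>1$), and requires the matching bounds $\rho_1(\varepsilon t,t)=\mathcal{O}(\varepsilon^{-2})$ and $m_1(\varepsilon t,t)-m_\varepsilon(t)=\mathcal{O}(\varepsilon^{-2})$ at the front. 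No fresh argument is needed here.

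Next I would treat the momentum functional $J_i$, once more splitting the space-time integral over the core $\{|\vec{x}|<\varepsilon t\}$, the front $\{|\vec{x}|=\varepsilon t\}$, and the exterior $\{|\vec{x}|>\varepsilon t\}$. The exterior integral vanishes because $(\rho_1,u_1)$ solves the radial system there. In the core the convective bulk term vanishes since $(\rho_\varepsilon,u_\varepsilon)$ does not depend on $\vec{x}$, while the pressure, carried as a bulk source, cancels gravity through the hydrostatic balance $\nabla p=\rho{\bf f}$ imposed in the ball of radius $\varepsilon t$; in particular the pressure produces no jump across the front. The decisive simplification is that with $\mu(\rho)=0$ the viscous stress $S$ vanishes identically, so $S_\varepsilon=0$ in the core, $S_1=0$ on the front, and the term $J_i^4$ disappears altogether, leaving $J_i=J_i^1+J_i^2+J_i^3$.

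I would then estimate the three survivors exactly as in the viscous proof. The term $J_i^1$ carries a factor $\varepsilon^\nu$ and is $\mathcal{O}(\varepsilon)$ because $\nu\geq1$, while $J_i^2$ is $\mathcal{O}(\varepsilon)$ from $m_1-m_\varepsilon=\mathcal{O}(\varepsilon^{-2})$ together with $\nu_t\sim-\varepsilon$. In $J_i^3$ the core contribution $m_\varepsilon u_\varepsilon=\alpha\beta^2\varepsilon^{2\nu-3}$ is negligible for $\nu\geq1$ just as before, so only the exterior contribution remains; after the same Taylor expansion of $\varphi$ and the approximation $\vec{\omega}\cdot\vec{\nu}_x\approx1$ it is controlled by the front integral of $\rho_1(\varepsilon t,t)u_1^2(\varepsilon t,t)$ against the surface factor $(\varepsilon t)^2$. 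A direct scaling count makes this $\mathcal{O}(\varepsilon)$ exactly when $\rho_1(\varepsilon t,t)u_1^2(\varepsilon t,t)=\mathcal{O}(\varepsilon^{-1})$, which is hypothesis (\ref{novisc_cd}); collecting the pieces yields $J_i=\mathcal{O}(\varepsilon)$ and completes the argument.

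The real content here is conceptual rather than a genuine obstacle: deleting viscosity removes $J_i^4$ and thereby decouples the front estimate from any control of $\partial_r u_1$, so the single flux bound (\ref{novisc_cd}) on $\rho_1u_1^2$ now suffices, and the velocity at the front need only be bounded rather than vanish like $\varepsilon$, which is why the assumption on $u_1$ is weaker than the pair (\ref{assum5})--(\ref{assum6}) forced in Theorem \ref{thm:sdw}. The only place demanding care is the bookkeeping of the pressure, which must be kept as a bulk term so that it annihilates gravity in the core and leaves no contribution on the SDW front.
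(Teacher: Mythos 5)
Your proposal is correct and follows essentially the same route as the paper: the paper's own argument is precisely to reuse the weak-formulation computation of Theorem \ref{thm:sdw} verbatim for the mass equation and the terms $J_i^1$, $J_i^2$, $J_i^3$, note that $\mu(\rho)=0$ makes $J_i^4$ vanish identically (with the pressure absorbed in the core by the hydrostatic balance $\nabla p=\rho\mathbf{f}$, exactly as you keep it), and then read off condition (\ref{novisc_cd}) from the requirement $J_i^3=O(\varepsilon)$. Your closing observation that removing $J_i^4$ decouples the front estimate from $\partial_r u_1$, which is why the hypothesis on $u_1$ is weaker than (\ref{assum5})--(\ref{assum6}), is exactly the point the paper makes.
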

\begin{remark}
Note that we do not consider the case when the pressure satisfied polytropic equation of state
\[p(\rho)=\kappa \rho^\gamma,\quad \gamma\in\Big(1,\frac{4}{3}\Big],\,\kappa>0\]
for $r\geq \varepsilon t,$ since it is necessary to impose different conditions on functions $\rho_\varepsilon(t)$ and $u_\varepsilon(t)$ to be able to obtain solution in the form of shadow wave.

 Additionally, it is also possible to obtain naked singularity solution in the form of shadow wave. However, that is left for future work, since the analysis of solution $(\rho_1,u_1)$ requires numerical experiments due to the fact that self-similar solution does not exists in that case.




\end{remark}

\medskip

Note that assumptions (\ref{assum5}) and (\ref{assum6}) from Theorem \ref{thm:sdw} or (\ref{novisc_cd}) from Theorem \ref{thm:sdw0} are necessary for shadow wave solution to exists. However, one should have in mind that such condition might not be sufficient, in the sense that not every choice of initial data will give physical collapsing solution. Namely, values of $\rho_1$ and $u_1$ taken at the point $(\varepsilon t,t)$ should agree with already known results (see \cite{shu} for example).

\section{Existence of self-similar solution}\label{sec:ss}

The experiments have shown that  the solution to the system (\ref{rad_sys}) exhibits self-similar behavior in the gravitational collapse at least for some classes of initial data. Due to the fact that there is still many unknowns about the formation of structures in the universe, assumption about self-similar solution is often used.

\begin{theorem}\label{thm:ss}
There exist self-similar solution to the system (\ref{rad_sys}), with $\mu(\rho)=t\rho$ and $\lambda(\rho)=0$, given in the form
\[\rho(r,t)=\frac{1}{t^2}R(y),\; u(r,t)=V(y), \quad y=\frac{r}{t}\]
 if the functions $R(y)$ and  $W(y):=\frac{V(y)-y}{y}$ are solutions to the ODE system
\begin{equation}\label{ode}
\begin{split}
R'Wy =&-R(W'y+3W+1),\\
W''Wy^2 =&\frac{1}{2}(Wy)^2(W'y+W+1-R)+(W'y+1)^2+4W+3W^2\\
& -\frac{A}{2}(W'y+3W+1)
\end{split}
\end{equation}
with initial conditions  
\begin{equation}\label{ode_id}
W(\varepsilon)=\tilde{v}-1,\, W'(\varepsilon)=\frac{\tilde{v}_1-\tilde{v}}{\varepsilon},\, R(\varepsilon)=\frac{d_1}{\varepsilon}, \quad  \tilde{v},\,\tilde{v}_1<0,\, d_1>0, 
\end{equation}
for $\varepsilon>0$ small enough.
\end{theorem}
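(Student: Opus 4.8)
The plan is to insert the self-similar ansatz $\rho(r,t)=t^{-2}R(y)$, $u(r,t)=V(y)$, $y=r/t$ directly into the radial system (\ref{rad_sys}) and reduce each equation to an ordinary differential equation in $y$, after which local existence will follow from the Cauchy--Lipschitz theorem. First I would record the chain-rule identities $\partial_t y=-y/t$ and $\partial_r y=1/t$, from which $\partial_t\rho=-t^{-3}(2R+yR')$, $\partial_r\rho=t^{-3}R'$, $\partial_t u=-t^{-1}yV'$, $\partial_r u=t^{-1}V'$. Substituting into the continuity equation $(\ref{rad_sys})_1$ and clearing the common factor $t^{-3}$ gives a relation among $R,R',V,V'$; rewriting it through $V=y(W+1)$ and $V'=W'y+W+1$ (which follow from the definition $W=(V-y)/y$) collapses it precisely to the first line of (\ref{ode}), namely $R'Wy=-R(W'y+3W+1)$.

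The genuinely non-trivial point is the gravitational term, because $g(r,t)=\int_0^r s^2\rho(s,t)\,ds$ is non-local and at first sight obstructs any reduction to a pointwise ODE. Here I would use mass conservation: integrating $(\ref{rad_sys})_1$ written in divergence form $r^2\partial_t\rho+\partial_r(r^2\rho u)=0$ over $s\in(0,r)$ yields $\partial_t g=-r^2\rho u$. In self-similar variables $g=tG(y)$ with $G(y)=\int_0^y\sigma^2R(\sigma)\,d\sigma$, so this identity becomes $G-yG'=-y^2RV$; combining it with $G'=y^2R$ produces the purely algebraic relation $G=-y^3RW$ (and one checks that differentiating this relation reproduces the first ODE, confirming consistency). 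This is the key step: it converts the gravity term $-r^{-2}g\rho=-t^{-3}GR/y^2$ into the local expression $t^{-3}yR^2W$, so that the momentum balance becomes a bona fide ODE.

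With the gravity term made local, I would substitute $V$, $V'$, and $V''=W''y+2W'$ into the momentum equation $(\ref{rad_sys})_2$, clear the factor $t^{-3}$, and eliminate $R'$ using the first ODE. The only second-order term is $2RV''$, contributing $2RW''y$; multiplying the whole equation by the factor $Wy/(2R)$ therefore turns the principal part into $W''Wy^2$ and, after collecting the remaining polynomial terms in $W,W',y,R$ (the gravity contribution $yR^2W$ producing the $-R$ term inside $\tfrac12(Wy)^2(W'y+W+1-R)$, and the pressure term $AR'$ producing $-\tfrac{A}{2}(W'y+3W+1)$ once $R'$ is substituted), yields exactly the second line of (\ref{ode}). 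I expect this collecting of terms to be the most error-prone part of the argument, but it is purely mechanical once $R'$ and $G$ have been eliminated.

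Finally, for existence I would put (\ref{ode}) in normal form by solving the first equation for $R'$ and the second for $W''$,
\[R'=-\frac{R(W'y+3W+1)}{Wy},\qquad W''=\frac{1}{Wy^2}\Big[\tfrac12(Wy)^2(W'y+W+1-R)+(W'y+1)^2+4W+3W^2-\tfrac{A}{2}(W'y+3W+1)\Big].\]
The right-hand sides are smooth functions of $(R,W,W',y)$ wherever $W\neq0$ and $y\neq0$. At the initial point $y=\varepsilon>0$ the data (\ref{ode_id}) give $W(\varepsilon)=\tilde v-1<-1<0$ and $R(\varepsilon)=d_1/\varepsilon>0$, so both non-degeneracy conditions hold in a neighbourhood of the initial point. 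Hence the vector field is locally Lipschitz there, and the Picard--Lindel\"of theorem furnishes a unique solution $(R,W)$ on some interval $y\in(\varepsilon-\delta,\varepsilon+\delta)$, which is the asserted self-similar solution. For $\varepsilon$ small the choice $\tilde v,\tilde v_1<0$, $d_1>0$ additionally guarantees, by continuity, that $W<0$ and $R>0$ persist on this interval, so the velocity indeed describes inflow and the density stays positive.
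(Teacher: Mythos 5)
Your proposal is correct and takes essentially the same route as the paper: the self-similar substitution, the key localization of the non-local gravity term via the continuity equation (the paper integrates the divergence form $(RWa^3)'=-Ra^2$ to get $g=-tR(y)W(y)y^3$, while you derive the equivalent identity $G=-y^3RW$ from $\partial_t g=-r^2\rho u$), and the elimination of $R'$ to arrive at (\ref{ode}). Your concluding Picard--Lindel\"of step is a harmless addition: since the theorem is stated conditionally on $(R,W)$ solving (\ref{ode}), the paper's proof stops at the reduction together with identifying the initial data (\ref{ode_id}) from the shadow-wave matching conditions of Theorem \ref{thm:sdw}.
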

\begin{remark}
In this paper we choose to consider self-similar solution obtained by taking change of variables $y=\frac{r}{t}$. That means that $y\to 0$ when $t\to \infty$ or $r\to 0$. However, some authors also analyze the self-similar solutions obtained using change of variables  $x=\frac{t}{r^\lambda}$, $\lambda>0$ (see \cite{sim} for example).
\end{remark}
\noindent
{\it Proof of Theorem \ref{thm:ss}.}
Using the change of variables $y=\frac{r}{t}$ and $\rho(r,t)=\frac{1}{t^2}R(y)$, $u(r,t)=V(y)$, we obtain the self-similar form of the system (\ref{rad_sys}). After replacing all variables in the first equation and multiplying the obtained equation with $t^3$, one gets
\[R(V'-2+\frac{2}{y}V)+R'(V-y)=0\]
or
\begin{equation}\label{ss_1st}
R'W+\frac{1}{y}R(W'y+3W+1)=0.
\end{equation}
Using the equivalent form of (\ref{ss_1st}),
\[(RWy^3)'=-Ry^2,\]
 it is easy to transform the gravitational force in the self-similar form. Namely, 
 \[g(r,t)=\int_0^r s^2\rho(s,t)ds=\int_0^y R(a)a^2tda=-t\int_0^y(RWa^3)'da=-tR(y)W(y)y^3,\]
 where $a=\frac{s}{t}$.
 For  smooth solutions, the term 
 \[\partial_t(\rho u)+\partial(\rho u^2)+\frac{2}{r}\rho u^2+\partial_r p(\rho)\]
 from the left-hand side of the momentum equation is equivalent to
 \[\begin{split}
 \rho(\partial_t u+u \partial_r u)+\partial_r p(\rho)&=\frac{1}{t^2}R\frac{1}{t}\big(V'(V-y)\big)+\frac{A}{t^3}R'\\
 &=\frac{1}{t^3}R(W'y+W+1)Wy+\frac{A}{t^3}R'.
 \end{split}\]
 Besides that, the following holds
 \[\begin{split}
 \partial_r(\partial_r u+\frac{2}{r}u)&=\frac{1}{t^2}(W''y+4W'),\\
 \partial_r \rho \partial_r u&=\frac{1}{t^4}R'(W'y+W+1),\\
 -\frac{1}{r^2}\rho g(r,t)&=\frac{1}{t^3}R^2Wy,
 \end{split}\]
 so the right-hand side of the equation $(\ref{rad_sys})_2$ equals
 \[\begin{split}
& 2t \Big(\partial_r \rho \partial_r u+\rho\partial_r(\partial_r u+\frac{2}{r}u)\Big)-\frac{1}{r^2}g(r,t)\rho\\
=& \frac{1}{t^3}\Big(2\big(R(W''y+4W')+R'(W'y+W+1)\big)+R^2Wy\Big).
 \end{split}\]
 Using (\ref{ss_1st}) to eliminate $R'$ from the above terms and multiplying the whole equation by $t^3$, one can easily prove that a self-similar solution to the system (\ref{rad_sys}) satisfies the ODE system (\ref{ode}).

The initial conditions at $y=\varepsilon$ are chosen such that the assumptions (\ref{assum5}) and (\ref{assum6}) of the Theorem \ref{sdw} are satisfied. More precisely, we need
\[u(\varepsilon t,t)=V(\varepsilon)=\tilde{v} \varepsilon, \quad \partial_r u(\varepsilon t,t)=\frac{1}{t}V'(\varepsilon)=\frac{1}{t}\tilde{v}_1\quad \rho(\varepsilon t,t)=\frac{1}{t^2}R(\varepsilon)=\frac{1}{t^2}\frac{1}{\varepsilon}d_1\]
or
\[W(\varepsilon)=\tilde{v}-1, \quad W'(\varepsilon)=\frac{1}{\varepsilon}(V'(\varepsilon)-1-W(\varepsilon))=\frac{\tilde{v}_1-\tilde{v}}{\varepsilon}\]
and $R(\varepsilon)=\frac{d_1}{\varepsilon}.$

The physical conditions for the gravitational collapse solution  near the surface of the core are
\begin{equation}\label{gc}
\tilde{v}<0, \quad\tilde{v}_1<0,\quad d_1>0,
\end{equation}
since the gravitational force will draw the matter to the core.
Note that (\ref{gc}) implies 
\[R'(\varepsilon)=-\frac{d_1}{\varepsilon}\frac{\tilde{v_1}+2(\tilde{v}-1)}{(\tilde{v}-1)\varepsilon}<0\]
which is necessary for gravitational collapse to occur.  
\hfill $\Box$



\medskip

By taking the initial conditions (at $y=\varepsilon$) such that assumptions listed in Theorem \ref{thm:sdw} are satisfied, we obtain the smooth solution to (\ref{ode}) at least for some $y>\varepsilon$. However, the smooth solution will blow up only if $W$ reaches zero at some point, which would mean that velocity  turned from negative to positive (inflow to outflow). 
Numerical experiments have shown that such a scenario is not possible under the initial conditions given above.
In the rest of the paper we shall prove that solution to do system (\ref{ode}) subject to initial data
\[W(\varepsilon)<-1,\, W'(\varepsilon)<0,\quad R(\varepsilon)>0\]
is bounded for $y>\varepsilon$. Even more, $W(y)<-\frac{1}{3}$ for $y>\varepsilon.$

\begin{remark}
Note that not every choice of  $\tilde{v},\tilde{v}_1<0$ will imply $\tilde{v}_1-\tilde{v}<0$ (which would mean $W'(\varepsilon)<0$). However, the numerical experiments have shown that  in the case $\tilde{v}_1-\tilde{v}>0$ ($W'(\varepsilon)>0$), $W'$ becomes negative after one iteration and continues to behave as in the case $\tilde{v}_1-\tilde{v}<0$. Besides that, the results obtained in the sequel hold under assumption $\tilde{v}_1-\tilde{v}<0$.
\end{remark}

\subsection{Notes on the critical points}

An introduction of viscosity term in the momentum equation, as well as assumption of vanishing pressure can significantly change the behavior of solution to the system (\ref{rad_sys}). When analyzing the self-similar solutions to (\ref{rad_sys}) without viscosity term ($\mu=0$) and assuming that the pressure satisfies the polytropic equation of state,
\[p(\rho)=\kappa \rho^\gamma, \quad \gamma\geq 1,\,\kappa>0,\]
 the authors found that there exists a critical point (also called sonic point) which plays significant role in the analysis of the solution dynamics (see \cite{Hadzgamma,Hadz1,shu}).

 The critical point is the point in which the self-similar solution could potentially break down. To demonstrate, take $\mu=0$ and $p(\rho)=\rho$ in (\ref{3Dmodel}). Suppose that there exists spherically symmetric and self-similar solution to that system. Using the same change of variables as above, one can easily obtain that $(R,W)$ satisfy the following ODE system
 \begin{equation}\label{ode_p}
 \begin{split}
 RW'y&=-R'Wy-R(3W+1)\\
 R'&=-RWy(W'y+W+1)+R^2Wy.
 \end{split}\end{equation}
 The term $R'$ on the left-hand side of the second equation appears due to the  assumption of non-vanishing pressure.
 Expressing $W'y$ from the first equation and replacing it into the second equation, one obtains 
 \[R'=RWy\frac{R+2W}{1-(Wy)^2}.\]
 The point $\bar{y}$ such that
 \[1-W(\bar{y})^2\bar{y}^2=0\]
 is called sonic or critical point.
 It is clear that the classical solution will break down at the point $y=\bar{y}$ if
 \[1-W(\bar{y})^2\bar{y}^2=0,\quad R(\bar{y})+2W(\bar{y})\neq 0.\]
 To secure continuity of the solution, it is necessary to have
 \[R(\bar{y})+2W(\bar{y})=0.\]
 Such a solution is in literature often referred as  Larson-Penston collapsing solution (see \cite{Hadz1, Larson, Penston}). However, only very special initial and boundary conditions will give the Larson-Penston solution. Otherwise, the density blows up while passing through the sonic point. Also, some authors focus their attention  on the solution without critical point (see \cite{shu} for example).
   If $p(\rho)=0$, then the term $R'$ in the second equation in (\ref{ode_p}) vanishes and we obtain
 \[R'=-\frac{R}{Wy}(R+2W).\]
 The only critical point in this case is point $\tilde{y}$ such that $W(\tilde{y})\tilde{y}=0$. Singularity at zero (center of gravity) is analyzed separately (solution in the form of shadow wave at the origin),  while it should be investigated if there exist the solution such that $W(\tilde{y})=0$ for some $\tilde{y}>0$.  It can be proved that at least for some initial data we have $W<0$ for each $y>0$. 
 

\medskip

The introduction of viscosity term  changes the obtained ODE system (the second derivative of $W$ appears in the second equation), and the critical point which is considered in the case $\mu=0$ with non-vanishing pressure is not the point in which density can blow up, even if the pressure is non-negligible. In our case, the self-similar collapsing solution will break down in finite time and far from the center of gravity only if $W(y^\ast)=0$ and $W'(y^\ast)\neq \frac{1}{3}$ for some $y^\ast>0$. As already said, we shall prove that such scenario is not possible, i.e.\ $W$ will not cross the line $W=0.$


\medskip

\medskip

\subsection{Self-similar solution to the system with viscosity term}

Let us first analyze the behavior of solution around the origin. Based on Theorem \ref{thm:ss} the initial conditions at $y=\varepsilon$ are (\ref{ode_id}).
Then $W''(\varepsilon)<0$, i.e.\ $W$ is concave at $y=\varepsilon,$ while $W'(\varepsilon)$ is decreasing at $y=\varepsilon$. Namely, the term
\[\frac{1}{W}\Big(\big(W'+\frac{1}{y}\big)^2+\frac{4}{y^2}W(1+\frac{3}{4}W)-\frac{A}{2y^2}(W'y+3W+1)\Big)\]
in $(\ref{ode})_2$ is negative of order $\varepsilon^{-2}$ at $y=\varepsilon$ and it dominates the term
\[\frac{W}{2}(W'y+W+1-R)\]
which is positive of order $\varepsilon^{-1}$ at $y=\varepsilon$. Additionally, suppose that $\tilde{v}_1-\tilde{v}<0$, i.e.\ $W'(\varepsilon)<0$. As already said, the above initial assumptions imply $R'(\varepsilon)<0.$
Also, at least for some $y>\varepsilon$, we have
\[W(y)<W(\varepsilon)<-1,\quad W'(y)<W'(\varepsilon)<0,\quad R(y)<R(\varepsilon).\]

\medskip

The following lemma will be used in the sequel.
\begin{lemma}\label{comp_thm}
Let $u(t)$ and $v(t)$ be real functions, differentiable and continuous on $[a,b]$ and suppose $u(a)\leq v(a)$. If 
\[u'(t)<v'(t) \quad \text{ for } t\in[a,b],\]
then $u(t)<v(t)$ for $t\in(a,b].$
\end{lemma}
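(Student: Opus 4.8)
The plan is to prove this standard comparison lemma by defining the difference function $w(t) := v(t) - u(t)$ and analyzing its behavior on $[a,b]$. By hypothesis, $w$ is differentiable and continuous on $[a,b]$, with $w(a) = v(a) - u(a) \geq 0$ and $w'(t) = v'(t) - u'(t) > 0$ for all $t \in [a,b]$. The goal reduces to showing that $w(t) > 0$ for every $t \in (a,b]$.

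First I would handle the case $t \in (a,b]$ directly using the Mean Value Theorem. For a fixed such $t$, applying the MVT to $w$ on the interval $[a,t]$ yields a point $\xi \in (a,t)$ with $w(t) - w(a) = w'(\xi)(t-a)$. Since $w'(\xi) > 0$ and $t - a > 0$, the right-hand side is strictly positive, so $w(t) > w(a) \geq 0$. This gives $v(t) - u(t) = w(t) > 0$, which is exactly the claim $u(t) < v(t)$.

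An equivalent and perhaps cleaner route is to observe that $w' > 0$ on $[a,b]$ forces $w$ to be strictly increasing there; combined with $w(a) \geq 0$ this immediately yields $w(t) > w(a) \geq 0$ for all $t > a$. Either formulation is elementary once the difference function is introduced, and the only regularity needed — differentiability on the open interval plus continuity on the closed one — is precisely what the hypotheses supply, so the MVT applies without issue.

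I do not anticipate a genuine obstacle here, as this is a textbook-level consequence of the Mean Value Theorem; the only subtlety worth flagging is the strictness at the left endpoint. The conclusion is stated for $t \in (a,b]$ rather than $[a,b]$, which is correct: at $t = a$ one can only guarantee $w(a) \geq 0$, so the strict inequality $u < v$ may fail precisely at $a$ (indeed it fails whenever $u(a) = v(a)$). Thus I would be careful to exclude the endpoint $a$ in the final statement, exactly as the lemma does.
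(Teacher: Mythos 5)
Your proposal is correct and follows essentially the same route as the paper: the paper defines the difference $h(t)=u(t)-v(t)$ (your $w$ with the opposite sign), notes $h(a)\leq 0$ and $h'(t)<0$, and concludes strict negativity on $(a,b]$. Your version is in fact slightly more careful, since you explicitly invoke the Mean Value Theorem to justify the strict monotonicity step that the paper passes over with only an appeal to continuity.
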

\begin{proof}
 Define $h(t)=u(t)-v(t)$. 
Then $h(a)\leq 0$ and $h'(t)<0$ for $y\in[a,b]$. 
Since functions $u$  and $v$ are continuous, it follows $h(t)<0$ for $t\in(a,b]$.
\end{proof}

\medskip

\begin{proposition}\label{thm:Rvac}
Let $R$ and $W$ be solutions to the ODE system (\ref{ode}) and suppose
\[W(\varepsilon)=\tilde{v}-1,\; W'(\varepsilon)=\frac{\tilde{v}_1-\tilde{v}}{\varepsilon},\; R(\varepsilon)=\frac{d_1}{\varepsilon},\]
where 
$ \tilde{v},\tilde{v}_1<0,d_1>0$ and $\tilde{v}_1-\tilde{v}<0$. There exists $z>\varepsilon$ and positive function $\tilde{R}$ such that $R(\varepsilon)\leq \tilde{R}(\varepsilon)$ and 
\[R(y)<\tilde{R}(\varepsilon)\Big(\frac{y}{\varepsilon}\Big)^{-\big(3+\frac{1}{W(\varepsilon)}\big)}\]
for $y\in(\varepsilon,z)$.
\end{proposition}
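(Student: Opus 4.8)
The plan is to integrate the first equation of (\ref{ode}) after writing it as a logarithmic derivative, and then to compare $R$ with the explicit power function on the right-hand side of the claimed inequality by means of the comparison Lemma \ref{comp_thm}. First I would rewrite $(\ref{ode})_1$, on the interval where $W\neq 0$, in the form
\[
\frac{R'(y)}{R(y)}=-\frac{W'(y)}{W(y)}-\frac{3}{y}-\frac{1}{W(y)\,y}.
\]
Then I would introduce the comparison function
\[
\tilde{R}(y):=\tilde{R}(\varepsilon)\Big(\frac{y}{\varepsilon}\Big)^{-\left(3+\frac{1}{W(\varepsilon)}\right)},\qquad \tilde{R}(\varepsilon):=R(\varepsilon)=\frac{d_1}{\varepsilon}>0,
\]
which is positive and satisfies $\tilde{R}'/\tilde{R}=-\big(3+1/W(\varepsilon)\big)/y$. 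Setting $u:=\ln R$ and $v:=\ln\tilde{R}$, the claim $R(y)<\tilde{R}(y)$ is equivalent to $u(y)<v(y)$; by Lemma \ref{comp_thm} it then suffices to verify $u(\varepsilon)\le v(\varepsilon)$ (which holds with equality for this choice of $\tilde{R}(\varepsilon)$) together with the strict derivative inequality $u'(y)<v'(y)$ on the relevant interval.

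Next I would reduce $u'<v'$ to a sign check. Subtracting the two logarithmic derivatives, $u'(y)<v'(y)$ is equivalent to
\[
-\frac{W'(y)}{W(y)}<\frac{1}{y}\Big(\frac{1}{W(y)}-\frac{1}{W(\varepsilon)}\Big).
\]
Here I would invoke the sign facts recorded just before the lemma: for $y$ close to and larger than $\varepsilon$ one has $W(y)\le W(\varepsilon)<-1<0$ and $W'(y)<0$. The monotonicity $W(y)\le W(\varepsilon)<0$ gives $1/W(y)\ge 1/W(\varepsilon)$, so the right-hand side is nonnegative and strictly positive once $W(y)<W(\varepsilon)$, while $W'<0$ together with $W<0$ forces $W'/W>0$, hence the left-hand side is strictly negative. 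Thus the inequality holds; note that already at $y=\varepsilon$ the right-hand side equals $0$ and the left-hand side equals $-W'(\varepsilon)/W(\varepsilon)<0$, so the derivative inequality is strict at the left endpoint.

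The main obstacle is to guarantee that the interval on which all these sign conditions simultaneously hold is genuinely nondegenerate, i.e.\ that there exists $z>\varepsilon$ with $W(y)\le W(\varepsilon)<0$ and $W'(y)<0$ for $y\in[\varepsilon,z)$. This is where the local analysis preceding the lemma enters: since $W'(\varepsilon)<0$ strictly and, by the dominance argument giving $W''(\varepsilon)<0$, the function $W'$ is strictly decreasing at $\varepsilon$, continuity of the locally existing smooth solution of (\ref{ode}) yields such a $z$, and on it $W$ is strictly decreasing so that $W(y)<W(\varepsilon)$.

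On $[\varepsilon,z)$ the derivative inequality $u'<v'$ therefore holds, and Lemma \ref{comp_thm} applied to $u=\ln R$ and $v=\ln\tilde{R}$ gives $\ln R(y)<\ln\tilde{R}(y)$, i.e.
\[
R(y)<\tilde{R}(\varepsilon)\Big(\frac{y}{\varepsilon}\Big)^{-\left(3+\frac{1}{W(\varepsilon)}\right)}\qquad\text{for }y\in(\varepsilon,z),
\]
as claimed. I would finally remark that the argument is unchanged for any $\tilde{R}(\varepsilon)\ge R(\varepsilon)$, which is the form in which the proposition is stated, since enlarging $\tilde{R}(\varepsilon)$ only strengthens the initial inequality $u(\varepsilon)\le v(\varepsilon)$.
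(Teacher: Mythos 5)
Your proof is correct, and at its core it is the paper's own argument: the same comparison function $\tilde{R}(y)=\tilde{R}(\varepsilon)\big(\frac{y}{\varepsilon}\big)^{-(3+1/W(\varepsilon))}$, the same sign facts ($W'<0$ and $W(y)<W(\varepsilon)<-1$ on a small interval $(\varepsilon,z)$ supplied by the local analysis $W'(\varepsilon)<0$, $W''(\varepsilon)<0$), and the same key Lemma \ref{comp_thm}. The one genuine difference is where you apply the lemma. The paper bounds $R'(y)<-\frac{R(y)}{W(y)y}(3W(y)+1)$ and then invokes Lemma \ref{comp_thm} directly on the pair $(R,\tilde{R})$; strictly speaking this does not verify the lemma's hypothesis $R'<\tilde{R}'$ pointwise, because the upper bound for $R'$ involves $R$ itself rather than $\tilde{R}$ (to pass from $R'<-\frac{R}{y}\big(3+\frac{1}{W(\varepsilon)}\big)$ and $\tilde{R}'=-\frac{\tilde{R}}{y}\big(3+\frac{1}{W(\varepsilon)}\big)$ to $R'<\tilde{R}'$ one would need $R\geq\tilde{R}$, the opposite of the conclusion). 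The paper's step is really an application of the differential-inequality comparison, i.e.\ of Lemma \ref{lemma:comp} with $f(y,u)=-\frac{u}{y}\big(3+\frac{1}{W(\varepsilon)}\big)$, rather than of Lemma \ref{comp_thm} as cited. Your passage to logarithms, $u=\ln R$, $v=\ln\tilde{R}$, removes exactly this difficulty: the logarithmic derivatives depend only on $W$, $W'$ and $y$, never on the unknown function values, so the pointwise inequality $u'<v'$ can be checked by the sign argument you give, and Lemma \ref{comp_thm} then applies verbatim. So your route buys a fully rigorous invocation of the stated lemma, at the cost of one extra (harmless) substitution; the paper's version is shorter but, as written, leans on the wrong comparison lemma.
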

\begin{proof}
Take $z>\varepsilon$ such that for $y\in(\varepsilon,z)$,
\[W(y)<W(\varepsilon),\quad W'(y)<W'(\varepsilon).\]
Then
\[R'(y)=-\frac{R(y)}{W(y)y}(W'(y)y+3W(y)+1)<-\frac{R(y)}{W(y)y}(3W(y)+1)  \quad \text{for } y\in(\varepsilon,z).\]
Define the function $\tilde{R}$ such that
\[\tilde{R}'(y)=-\frac{\tilde{R}(y)}{W(\varepsilon)y}(3W(\varepsilon)+1),\quad y\in(\varepsilon,z)\]
and $R(\varepsilon)\leq\tilde{R}(\varepsilon).$
Then
\[\tilde{R}(y)=\tilde{R}(\varepsilon)\Big(\frac{y}{\varepsilon}\Big)^{-\big(3+\frac{1}{W(\varepsilon)}\big)},\quad y\in(\varepsilon,z).\]
Due to Lemma \ref{comp_thm}, it follows 
\[R(y)<\tilde{R}(y),\quad y\in(\varepsilon,z).\]
Since $W(y)<-1$ for $y\in(\varepsilon,z)$, we have $-\big(3+\frac{1}{W(\varepsilon)}\big)<-2,$
so
\[\lim_{\varepsilon\to 0}\tilde{R}(y)=0.\]
That implies $R(y)\to 0$ as $\varepsilon\to 0$ for $y\in(\varepsilon,z).$
\end{proof}

\begin{proposition}\label{thm:H}
Let $W$ be continuous and differentiable function on the interval $(\varepsilon,b)$. 
Suppose $H(y):=W'(y)y+3W(y)+1<0$ for $y\in (\varepsilon,b)$ and $W(\varepsilon)<-\frac{1}{3}$. Then $W(y)< -\frac{1}{3}$ for $y\in(\varepsilon, b).$
\end{proposition}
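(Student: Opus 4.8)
The plan is to recognize $H(y) = yW'(y) + 3W(y) + 1$ as a positive multiple of an exact derivative, which converts the sign hypothesis on $H$ into a monotonicity statement — exactly the integrating-factor device already used in the proof of Proposition \ref{thm:Rvac}. Concretely, I would set $G(y) := 3W(y) + 1$ and compute
\[
\frac{d}{dy}\big(y^3 G(y)\big) = 3y^2 G(y) + y^3 G'(y) = 3y^2\big(3W(y) + 1 + yW'(y)\big) = 3y^2 H(y).
\]
Thus the auxiliary function $F(y) := y^3\big(3W(y)+1\big)$ has derivative $F'(y) = 3y^2 H(y)$, and the problem is reduced to tracking the sign of $F$.

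Now the hypotheses do the work directly. Since $y^2 > 0$ and $H(y) < 0$ on $(\varepsilon, b)$, we get $F'(y) < 0$ there, so $F$ is strictly decreasing on $(\varepsilon, b)$ (and continuous up to $y=\varepsilon$, where $W(\varepsilon)$ is defined). Because $W(\varepsilon) < -\tfrac13$, the initial value is negative: $F(\varepsilon) = \varepsilon^3\big(3W(\varepsilon)+1\big) < 0$. Monotonicity then yields $F(y) < F(\varepsilon) < 0$ for every $y \in (\varepsilon, b)$; this is precisely the one-variable statement behind Lemma \ref{comp_thm}, applied to $F$ and the constant function $F(\varepsilon)$. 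Dividing by $y^3 > 0$ gives $3W(y) + 1 < 0$, i.e.\ $W(y) < -\tfrac13$, as claimed.

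The only real obstacle is spotting the correct integrating factor; once one notices that $W \equiv -\tfrac13$ is the equilibrium making $3W+1$ vanish, and that $y^3$ is the natural weight for the operator $W \mapsto yW' + 3W$, the argument is immediate and requires no estimates. As a cross-check — and an alternative route for a reader who prefers to avoid the integrating factor — one can argue by contradiction: if $W$ first met the level $-\tfrac13$ at some $y^\ast > \varepsilon$, then $3W(y^\ast)+1 = 0$ forces $H(y^\ast) = y^\ast W'(y^\ast)$, so $H(y^\ast) < 0$ and $y^\ast > 0$ give $W'(y^\ast) < 0$; but $W$ approaching the value $-\tfrac13$ from below at $y^\ast$ forces the (left) derivative $W'(y^\ast) \ge 0$, a contradiction. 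I would present the integrating-factor version as the main proof, since it matches the style already established in the paper and makes the dependence on the two hypotheses ($H<0$ and the sign of the initial datum) completely transparent.
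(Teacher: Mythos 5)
Your proof is correct, but it takes a genuinely different route from the paper's. The paper constructs the explicit barrier function $\tilde{W}(y)=\frac{3W(\varepsilon)+1}{3}\bigl(\frac{\varepsilon}{y}\bigr)^3-\frac{1}{3}$ solving $\tilde{W}'y+3\tilde{W}+1=0$ with $\tilde{W}(\varepsilon)=W(\varepsilon)$, and then invokes the two-function comparison result (Lemma \ref{lemma:comp}, whose proof is a first-crossing contradiction argument) to conclude $W(y)<\tilde{W}(y)\leq -\frac{1}{3}$. You instead multiply by the integrating factor $y^3$ and observe that $F(y):=y^3\bigl(3W(y)+1\bigr)$ satisfies $F'(y)=3y^2H(y)<0$, so the conclusion follows from plain monotonicity (equivalently, the simpler Lemma \ref{comp_thm} with a constant comparison function). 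The two arguments are mathematically equivalent --- your inequality $F(y)<F(\varepsilon)$, rewritten, is exactly $W(y)<\tilde{W}(y)$ --- but your version is more self-contained: it bypasses Lemma \ref{lemma:comp} entirely, whose proof in the paper is somewhat informal (the existence of the point $d$ with $u(d)-v(d)=0$ and positive derivative difference is asserted rather than carefully justified), whereas reducing to ``negative derivative implies decreasing'' leaves nothing delicate. What the paper's route buys in exchange is the explicit comparison solution $\tilde{W}$ and a uniform framework (the same comparison-lemma style is used in Proposition \ref{thm:Rvac}). Your secondary contradiction argument at the first crossing of the level $-\frac{1}{3}$ is also valid and is, in effect, the paper's Lemma \ref{lemma:comp} specialized to this situation. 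One small point both you and the paper gloss over: the hypothesis states differentiability only on the open interval $(\varepsilon,b)$ while using the value $W(\varepsilon)$, so continuity of $W$ at $y=\varepsilon$ must be (and implicitly is) assumed; you at least flag this explicitly.
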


In order to prove Proposition \ref{thm:H} we will prove the following lemma first.
\begin{lemma}\label{lemma:comp}
Suppose the real functions $u(x)$ and $v(x)$ are continuous and differentiable on the interval $[a,b]$, and $u(a)=v(a)$. Let $f:\mathbb{R}\times \mathbb{R}\to \mathbb{R}$ be continuous function and suppose
\begin{equation}\label{comp_ineq}
\frac{du}{dx}-f(x,u)<\frac{dv}{dx}-f(x,v), \quad x\in[a,b].
\end{equation}
Then $u(x)< v(x)$ for $x\in(a,b].$
\end{lemma}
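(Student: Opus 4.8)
The plan is to run a first-crossing argument on the difference $h(x) := u(x) - v(x)$, in the spirit of Lemma~\ref{comp_thm} but adapted to the nonlinear term $f$. Since $u(a) = v(a)$ we have $h(a) = 0$, and evaluating the hypothesis (\ref{comp_ineq}) at $x = a$ gives $h'(a) = u'(a) - v'(a) < f(a, u(a)) - f(a, v(a)) = 0$, because the two arguments of $f$ coincide there. Hence $h$ is strictly decreasing at $a$, and by differentiability $h(x) < 0$ on some initial interval $(a, a+\delta]$.

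Next I would assume, for contradiction, that $h$ fails to be negative throughout $(a,b]$ and set $x_0 := \inf\{x \in (a,b] : h(x) \geq 0\}$. The opening step forces $x_0 > a$, while continuity of $h$ together with the definition of the infimum yields $h(x) < 0$ for $x \in (a, x_0)$ and $h(x_0) = 0$; in particular $u(x_0) = v(x_0)$. This point $x_0$ is the ``first return to zero'' of $h$.

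The contradiction then comes from comparing two estimates of $h'(x_0)$. Approaching $x_0$ from the left through points where $h < 0 = h(x_0)$ gives $h'(x_0) = \lim_{x \to x_0^-} \frac{h(x_0) - h(x)}{x_0 - x} \geq 0$. On the other hand, since $u(x_0) = v(x_0)$, the strict inequality (\ref{comp_ineq}) gives $h'(x_0) < f(x_0, u(x_0)) - f(x_0, v(x_0)) = 0$. These are incompatible, so the set $\{x : h(x)\ge 0\}$ is empty and $h(x) < 0$, i.e. $u(x) < v(x)$, throughout $(a,b]$.

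The main obstacle to keep in mind is that $f$ is assumed only continuous, not Lipschitz, so a Gr\"onwall-type bound on $f(x,u) - f(x,v)$ in terms of $|u-v|$ is not available; the whole argument instead hinges on the strictness of (\ref{comp_ineq}), which makes the $f$-terms cancel \emph{exactly} at any candidate crossing point $x_0$ (where $u = v$) and so produces the sign contradiction. I would take care to justify $x_0 > a$ rigorously from $h'(a) < 0$, and to derive the left-hand estimate $h'(x_0) \geq 0$ from difference quotients rather than from any unproven monotonicity of $h$.
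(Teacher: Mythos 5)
Your proof is correct and takes essentially the same route as the paper: a contradiction at the first return to zero of $h=u-v$, where the strict inequality (\ref{comp_ineq}) forces $h'<0$ (since the $f$-terms cancel when $u=v$) while the crossing from below forces $h'\geq 0$. If anything, your version is slightly more careful than the paper's, which asserts the stronger claim that the derivative is strictly positive at the crossing point; your difference-quotient estimate $h'(x_0)\geq 0$ is cleanly justified and suffices because of the strictness of the hypothesis.
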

\begin{proof}
Suppose that there exists $c\in(a,b]$ such that $u(c)\geq v(c)$. The function $u-v$ is continuous on $[a,b]$ and 
\[u(a)-v(a)=0,\quad \frac{du(a)}{dx}-\frac{dv(a)}{dx}<0.\]
It follows that there exists $d\in(a,c]$ such that 
\[u(d)-v(d)=0,\quad \frac{du(d)}{dx}-\frac{dv(d)}{dx}>0,\]
since the negative and decreasing function in the neighborhood of the point $x=a$ has to start to increase to become non-negative at the point $x=c$. That contradicts assumption (\ref{comp_ineq}).
\end{proof}

\medskip

\noindent
{\it Proof of Proposition \ref{thm:H}.}
Let $\tilde{W}$ be a solution to the ordinary differential equation 
\[\tilde{W}'y+3\tilde{W}+1=0,\quad y>\varepsilon \]
with $\tilde{W}(\varepsilon)=W(\varepsilon)$,
i.e.\ $\tilde{W}(y)=\frac{3W(\varepsilon)+1}{3}\Big(\frac{\varepsilon}{y}\Big)^3-\frac{1}{3}$ for $y\geq \varepsilon.$
Define $f(y,W):= -\frac{1}{y}(3W+1).$ Applying Lemma \ref{lemma:comp} on the functions $W$ and $\tilde{W}$, one obtains that if 
\[H(y)=\frac{dW}{dy}-f(y,W)<\frac{d\tilde{W}}{dy}-f(y,\tilde{W})=0, \quad \text{for } y\in(\varepsilon,b),\]
then $W(y)<\tilde{W}(y)\leq -\frac{1}{3}$. That concludes the proof of Proposition \ref{thm:H}. \hfill $\Box$

\bigskip

\begin{proposition}\label{thm:Hless}
 Let $(R,W)$ be solution to the system (\ref{ode}) with initial conditions
\begin{equation}\label{id}
W(\varepsilon)<-1,\; W'(\varepsilon)<0,\; W''(\varepsilon)<0, \quad R(\varepsilon)>0, \qquad \varepsilon>0.\end{equation}
 Suppose $W(y)< -\frac{1}{3}$ for $y\in(\varepsilon,c)$ and $R$ is small enough such that $-2W(y)-R(y)>0$ on the interval $(\varepsilon,c)$. Then $H(y)<0$ for each $y\in(\varepsilon,c)$.
\end{proposition}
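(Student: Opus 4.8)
The plan is to argue by contradiction, controlling the first point at which $H$ could fail to be negative. First I would record the base case: since $W(\varepsilon)<-1$ gives $3W(\varepsilon)+1<-2$ and $W'(\varepsilon)<0$ gives $W'(\varepsilon)\varepsilon<0$, we have $H(\varepsilon)=W'(\varepsilon)\varepsilon+3W(\varepsilon)+1<-2<0$, and $H$ is continuous on $(\varepsilon,c)$. If $H<0$ throughout, there is nothing to prove; otherwise let $y^\ast$ be the infimum of $\{y\in(\varepsilon,c):H(y)\geq 0\}$. By continuity $H(y^\ast)=0$ while $H<0$ on $(\varepsilon,y^\ast)$, and since $y^\ast>\varepsilon$ lies in the interval we have $W(y^\ast)<-\frac13$ and $-2W(y^\ast)-R(y^\ast)>0$. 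Because $H$ climbs to $0$ from below, differentiability forces $H'(y^\ast)\geq 0$. The goal is then to show $H'(y^\ast)<0$, which yields the contradiction.

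The key computation is to evaluate $H'=W''y+4W'$ at $y^\ast$. At this point $H(y^\ast)=0$ gives the relation $W'(y^\ast)y^\ast=-3W(y^\ast)-1$, which I would substitute into the second equation of (\ref{ode}). Crucially, the pressure term $-\frac{A}{2}(W'y+3W+1)=-\frac{A}{2}H$ vanishes at $y^\ast$ for both $A=0$ and $A=1$; in the surviving right-hand side the factor $W'y+W+1-R$ collapses to $-2W-R$ and $(W'y+1)^2$ collapses to $9W^2$. Solving for $W''(y^\ast)$ and then combining with $4W'(y^\ast)=4(-3W(y^\ast)-1)/y^\ast$, the $1/y^\ast$ contributions cancel exactly, leaving the clean identity
\[H'(y^\ast)=\tfrac12\,W(y^\ast)\,y^\ast\,\big(-2W(y^\ast)-R(y^\ast)\big).\]

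Finally I would read off the sign: by the hypotheses $W(y^\ast)<-\frac13<0$, $y^\ast>0$, and $-2W(y^\ast)-R(y^\ast)>0$, so the product is strictly negative, i.e.\ $H'(y^\ast)<0$, contradicting $H'(y^\ast)\geq 0$. Hence no such $y^\ast$ exists and $H(y)<0$ for all $y\in(\varepsilon,c)$. The reasoning is a standard first-crossing (maximum-principle) argument, so the only genuine work — and the only place a sign slip would be fatal — is the algebra producing the displayed formula for $H'(y^\ast)$; I expect the main obstacle to be verifying carefully that the two $1/y^\ast$ terms cancel and that the $A$-term drops out precisely because it is proportional to $H$. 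Note that the hypothesis $W''(\varepsilon)<0$ is not used directly here; it enters only through the earlier propositions guaranteeing $W<-\frac13$ and $-2W-R>0$ on the interval, which are taken as given.
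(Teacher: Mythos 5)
Your proof is correct and follows essentially the same route as the paper: a first-crossing contradiction in which, at the first zero $y^\ast$ of $H$, the relation $W'y^\ast=-3W-1$ is substituted into $(\ref{ode})_2$ (the $A$-term dropping out because it is proportional to $H$) to obtain exactly the paper's identity $H'(y^\ast)=\tfrac12 W(y^\ast)y^\ast\bigl(-2W(y^\ast)-R(y^\ast)\bigr)<0$, contradicting $H'(y^\ast)\geq 0$. The only cosmetic difference is that you verify $H(\varepsilon)<0$ by direct evaluation, whereas the paper derives the initial negativity of $H$ from monotonicity of $W$ and $W'$ near $\varepsilon$ (which is where it uses $W''(\varepsilon)<0$).
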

\begin{proof}
The initial conditions (\ref{id}) imply that there exist $y_c>\varepsilon$ such that $W(y)<W(\varepsilon)<-1$ and $W'(y)<W'(\varepsilon)<0$ at least in the interval $(\varepsilon, y_c)$. As a consequence, $H(y)<0$ for $y\in[\varepsilon,y_c)$. Suppose now that there exist $y_p>y_c$ such that $H(y)<0$ for $y\in[\varepsilon,y_p)$ and $H'(y_p)\geq 0$, $H(y_p)=W'(y_p)+3W(y_p)+1=0$. The assumption $H'(y_p)\geq 0$ is due to the fact that the negative function $H$ has to increase before the point $y=y_p$ in order to reach 0.  

By writing the equation $(\ref{ode})_2$  in equivalent form
\begin{equation*}\label{eq2_equiv}
\begin{split}
(W'y+3W+1)'=&\frac{1}{2}Wy(W'y+W+1-R)\\&+\frac{(W'y+3W+1)(W'y+W+1)}{Wy}
-\frac{A}{2Wy}(W'y+3W+1),
\end{split}
\end{equation*}
one obtains
\begin{equation}\label{y_p}
	H'(y_p)=\frac{1}{2}W(y_p)y_p(-2W(y_p)-R(y_p)).
\end{equation}
So, $H'(y_p)$ is negative if $2W(y_p)+R(y_p)$ is negative, which contradicts the assumption $H'(y_p)\geq 0.$ It follows that $H(y)<0$ as long as $-2W(y)-R(y)>0.$
\end{proof}

In order to prove global existence of self-similar solution for $y>\varepsilon$ it is enough to prove that $R$ is decreasing and $W$ negative for each $y>\varepsilon$ which is proved by combining 	 Propositions \ref{thm:Rvac}, \ref{thm:H} and \ref{thm:Hless}.

\begin{theorem}\label{thm:main_anl}
Let $R$ and $W$ be solutions to the ODE system (\ref{ode}) and suppose
\begin{equation}\label{id_collapse}
W(\varepsilon)=\tilde{v}-1,\; W'(\varepsilon)=\frac{\tilde{v}_1-\tilde{v}}{\varepsilon},\; R(\varepsilon)=\frac{d_1}{\varepsilon},
\end{equation}
where 
$ \tilde{v},\tilde{v}_1<0,d_1>0$ and $\tilde{v}_1-\tilde{v}<0$. Then $W(y)<-\frac{1}{3}$ and $R'(y)<0$ for each $y\geq \varepsilon$.
\end{theorem}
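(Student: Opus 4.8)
The plan is to reduce the whole statement to the single claim that $H(y):=W'(y)y+3W(y)+1<0$ for every $y\ge\varepsilon$, and then to close a continuity argument built from the three propositions. First I would record two reductions. From $(\ref{ode})_1$ one has $R'=-RH/(Wy)$; since $R>0$ and $y>0$, the factor $-R/(Wy)$ is positive whenever $W<0$, so $R'$ and $H$ carry the same sign and $R'<0\iff H<0$. Second, by Proposition \ref{thm:H}, $H<0$ on $(\varepsilon,b)$ together with $W(\varepsilon)<-\tfrac13$ forces $W<-\tfrac13$ (hence $W<0$) on all of $(\varepsilon,b)$. The hypotheses (\ref{id_collapse}) give $W(\varepsilon)=\tilde v-1<-1<-\tfrac13$, $W'(\varepsilon)<0$ and $R(\varepsilon)>0$, and (as observed just before the statement) $W''(\varepsilon)<0$, so the standing assumptions (\ref{id}) of Propositions \ref{thm:Rvac} and \ref{thm:Hless} are met. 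It therefore suffices to prove $H<0$ globally, after which $W<-\tfrac13$ and $R'<0$ follow.

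Next I would run a maximal-interval argument. Set $Y:=\sup\{b>\varepsilon:H(y)<0\text{ for all }y\in(\varepsilon,b)\}$. A direct computation gives $H(\varepsilon)=\tilde v_1+2\tilde v-2<-2$ and $H'(\varepsilon)=W''(\varepsilon)\varepsilon+4W'(\varepsilon)<0$, so $H$ is negative and strictly decreasing at $\varepsilon$ and $Y>\varepsilon$. On $(\varepsilon,Y)$, Proposition \ref{thm:H} gives $W<-\tfrac13$, hence $-2W>\tfrac23$, and the first reduction gives $R'<0$, so $R$ is strictly decreasing there. Assume for contradiction that $Y<\infty$. By continuity and maximality $H(Y)=0$ while $H<0$ on $(\varepsilon,Y)$, so $H'(Y)\ge0$, and $W(Y)\le-\tfrac13$. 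Evaluating the equivalent form of $(\ref{ode})_2$ at the zero $Y$ of $H$, exactly as in (\ref{y_p}), yields
\[H'(Y)=\tfrac12\,W(Y)Y\,(-2W(Y)-R(Y));\]
since $W(Y)Y<0$, the inequality $H'(Y)\ge0$ can hold only if $-2W(Y)-R(Y)\le0$, that is $R(Y)\ge-2W(Y)\ge\tfrac23$.

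It remains to contradict $R(Y)\ge\tfrac23$, and this is the step I expect to be the main obstacle: the datum $R(\varepsilon)=d_1/\varepsilon$ is large for small $\varepsilon$, so $-2W-R<0$ near $\varepsilon$ and the crossing estimate is unavailable on the whole interval. The resolution is quantitative and rests on Proposition \ref{thm:Rvac}: on the initial monotone window $(\varepsilon,z)$ one has $R(y)<\tilde R(\varepsilon)(y/\varepsilon)^{-p}$ with $p=3+1/W(\varepsilon)>2$, so $R$ falls below $\tfrac23$ once $y$ exceeds a threshold $y_0=\varepsilon^{\,1-1/p}(3d_1/2)^{1/p}$, and $y_0\to0$ as $\varepsilon\to0$. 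Because $H(\varepsilon)<-2$ with $H'(\varepsilon)<0$, $H$ first moves away from $0$, while $R$ decreases monotonically along $(\varepsilon,Y)$; taking $\varepsilon$ small (as already assumed in Theorem \ref{thm:ss}) forces $R$ below $\tfrac23$ before $H$ can climb back to $0$, giving $R(Y)<\tfrac23\le-2W(Y)$ and contradicting the previous paragraph. Hence $Y=\infty$, so $H<0$ for all $y\ge\varepsilon$, and the two reductions deliver $W<-\tfrac13$ and $R'<0$ for all $y\ge\varepsilon$. The delicate point is to align the decay window of Proposition \ref{thm:Rvac} with the interval on which $H<0$ is already secured, i.e.\ to guarantee that $z$ and the initial monotone region extend past $y_0$.
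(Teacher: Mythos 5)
Your proposal is correct and follows essentially the same route as the paper: the same three propositions, the same first-crossing contradiction via the identity (\ref{y_p}) (the paper tracks the first point where $W=-\frac{1}{3}$ while you track the first zero of $H$, but these coincide), and the same appeal to Proposition \ref{thm:Rvac} to force $R$ below $-2W$ at the would-be crossing. The ``delicate point'' you flag at the end --- guaranteeing that $R$ has decayed below $\frac{2}{3}$ before $H$ can climb back to zero --- is handled just as informally in the paper's own proof, so your argument matches the paper both in structure and in its level of rigor.
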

\begin{proof}
The function $R$ will start to increase only if one of two conditions: $W(y)<0$ or $H(y)<0$ is violated (see $(\ref{ode})_1$). 
	Due to Proposition \ref{thm:Rvac}, at least for some $y>\varepsilon$ the function
	$R$ is bounded from above by a function small enough for  condition $-W(y)-R(y)>0$ imposed in Proposition \ref{thm:Hless} to hold. That condition will hold as long as $R$ is decreasing and W negative.  
	
	Combining Propositions \ref{thm:H} and \ref{thm:Hless} one concludes that for sufficiently small $\varepsilon >0$, $W(y)< -\frac{1}{3}$ if and only if $H(y)<0$ at least for some $y>\varepsilon$.
	 
	Suppose now that there exist $y_k$ such that $W(y)<-\frac{1}{3}$, $H(y)<0$ and $R'(y)<0$ for $y\in(\varepsilon,y_k)$ and  $W(y_k)=-\frac{1}{3}$, $W'(y_k)\geq 0$. 
	Then $H(y_k)=W'(y_k)y_k=0$ and $H'(y_k)\geq 0$ (since $H<0$ for $y<y_k$). $H(y_k)=0$ implies $W'(y_k)=0.$ On the other side, from the proof of Proposition \ref{thm:Hless} we have $H'(y_k)<0$ (see (\ref{y_p})), which is in contradiction with $H'(y_k)\geq 0$. It follows that it does not exist $y_k$ such that $W(y_k)=-\frac{1}{3}$, i.e.\
	$W(y)<-\frac{1}{3}$ for each $y>\varepsilon.$
	
\end{proof}
\medskip

 
To conclude, there exist global radially symmetric  solution to the system (\ref{rad_sys}) given by a shadow wave at the origin  and self-similar solution that solves the problem (\ref{ode}, \ref{ode_id}).

\medskip

\begin{remark}
Note that $W>-1$ implies that the velocity $V(y)$ is positive which corresponds to outflow of matter  from the center of gravity. The result of Theorem \ref{thm:main_anl} does not exclude $-1<W<-\frac{1}{3}$ as possibility. 
\end{remark}

\medskip

As already said, the initial  smooth distribution of matter gives the the behavior of solution when $y\to \infty$. Namely,
\[\rho(r,t)\to {\rm const},\, u(r,t)\to 0\quad \text{ as } t\to 0^+\]
holds and it is equivalent to
\begin{equation}\label{at_inf}
R(y)\to 0, \, W(y)\to -1 \quad \text{ as } y\to \infty,
\end{equation}
since $y=\frac{r}{t}\to \infty$ when $t\to 0.$  
It is easy to prove the asymptotic behavior of the  solution to the  system (\ref{ode}) when $y\to \infty$. We have
\begin{equation}\label{assym_inf}
R(y)\to 0,\, W(y)\to -1,\, W'(y)y\to 0 \quad \text{as } y\to \infty,
\end{equation}
which corresponds to (\ref{at_inf}).  Note that $R(y)=\frac{2}{y^2}$ and $W(y)=-1$ is exact solution to the system (\ref{ode}) if $A=1$, which is not the case if $A=0$.



\medskip

\begin{example}
On the Figure \ref{fig1}, one can see the solution to the system (\ref{ode}) for $A=0$ obtained using Runge Kutta method with the initial data at $\varepsilon=0.01$,
\[W(\varepsilon)=-5, \quad W'(\varepsilon)=-100, \quad R(\varepsilon)=500.\]
\end{example}
\begin{center}
\begin{figure}
\includegraphics[scale=0.8]{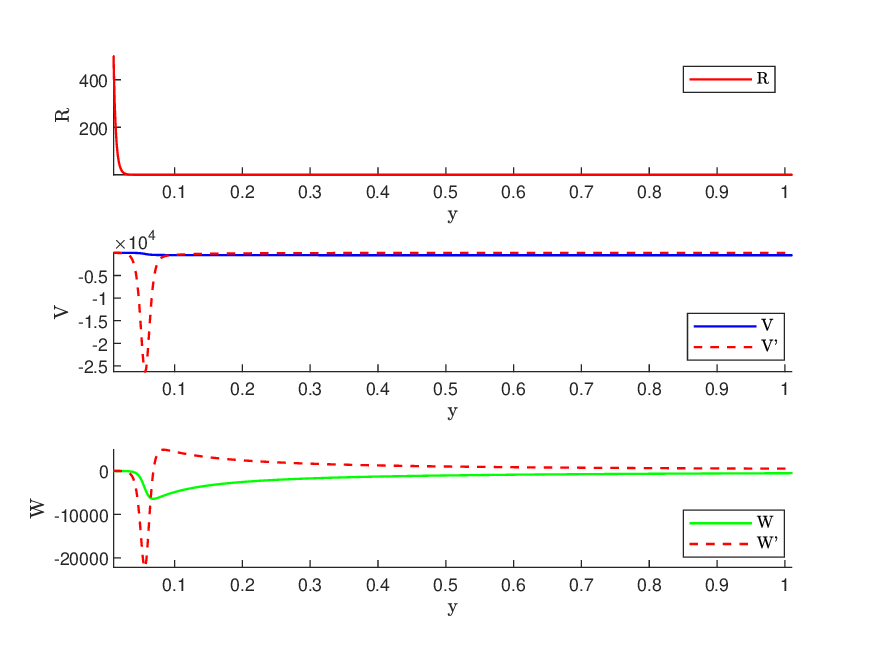}
\caption{Numerical solution to (\ref{ode}) for $A=0$ with $\varepsilon=0.01$, $r_0=500,\omega_0=-5, \omega_1=-100.$ }
\label{fig1}
\end{figure}
\end{center}

\bigskip
Numerical experiments confirmed  that self-similar solution to (\ref{ode}), with the initial conditions at the point $y=\varepsilon$ taken such that assumptions of Theorem \ref{thm:sdw} and initial conditions (\ref{id_collapse}) hold, will not break down for $y>\varepsilon$. The function $W$ starting out as decreasing and concave at the point $y=\varepsilon$,  will reach its minimum value at the point $y=z>\varepsilon$, after which it will start to increase while approaching $W=-1$. On the other side, the function $R$ will quickly decrease to 0, as shown in Proposition \ref{thm:Rvac}.
That can be explained in the following way: The function $W$ starting as concave and decreasing, will preserve such behavior until $|W(y)|$ becomes large enough at the point $y=y_{c}<z$ such that $W''(y_c)=0.$ The first term on the right-hand side of $(\ref{ode})_2$ will dominate as long as $W(y)$ does not increase enough for $W''$ to become equal to zero again (at the point $y=y_d>z$), after which positive $W'$ will start to decrease (see Figure \ref{fig:sketch}). 

\medskip


\medskip

\begin{figure}

\tikzset{every picture/.style={line width=0.75pt}} 
\begin{tikzpicture}[x=0.75pt,y=0.75pt,yscale=-1,xscale=1]

\draw [color={rgb, 255:red, 65; green, 117; blue, 5 }  ,draw opacity=1 ][line width=1.5]    (139.25,90.31) .. controls (173.93,98.82) and (175.75,151.94) .. (212.09,126.64) ;
\draw [color={rgb, 255:red, 65; green, 117; blue, 5 }  ,draw opacity=1 ][line width=1.5]    (212.09,126.64) .. controls (250.26,103.04) and (248.44,82.8) .. (298.41,80.27) ;
\draw [color={rgb, 255:red, 208; green, 2; blue, 27 }  ,draw opacity=1 ][line width=1.5]    (141.06,146.8) .. controls (181.2,260.7) and (164.85,101.35) .. (194.83,44.02) ;
\draw [color={rgb, 255:red, 155; green, 155; blue, 155 }  ,draw opacity=1 ][line width=0.75]  [dash pattern={on 4.5pt off 4.5pt}]  (163.03,44.02) -- (162.12,188.19) ;
\draw [color={rgb, 255:red, 155; green, 155; blue, 155 }  ,draw opacity=1 ] [dash pattern={on 4.5pt off 4.5pt}]  (194.83,44.02) -- (192.1,132.55) ;
\draw [color={rgb, 255:red, 208; green, 2; blue, 27 }  ,draw opacity=1 ][line width=1.5]    (194.83,44.02) .. controls (242.08,-0.67) and (280.24,13.67) .. (298.41,32.22) ;
\draw [color={rgb, 255:red, 155; green, 155; blue, 155 }  ,draw opacity=1 ] [dash pattern={on 4.5pt off 4.5pt}]  (258.43,12.82) -- (257.52,92.08) ;
\draw  [fill={rgb, 255:red, 155; green, 155; blue, 155 }  ,fill opacity=0.09 ][dash pattern={on 0.84pt off 2.51pt}][line width=0.75]  (298.25,36.77) .. controls (298.25,30.06) and (303.69,24.63) .. (310.39,24.63) -- (458,24.63) .. controls (464.7,24.63) and (470.14,30.06) .. (470.14,36.77) -- (470.14,73.19) .. controls (470.14,79.9) and (464.7,85.33) .. (458,85.33) -- (310.39,85.33) .. controls (303.69,85.33) and (298.25,79.9) .. (298.25,73.19) -- cycle ;
\draw    (278.03,157.09) -- (383.95,157.09) ;
\draw  [dash pattern={on 4.5pt off 4.5pt}]  (278.66,169.48) -- (384.58,169.48) ;
\draw [color={rgb, 255:red, 208; green, 2; blue, 27 }  ,draw opacity=1 ][line width=1.5]    (282.31,144.69) .. controls (305.04,151.63) and (347.97,156.09) .. (373.85,155.1) ;
\draw [color={rgb, 255:red, 65; green, 117; blue, 5 }  ,draw opacity=1 ][line width=1.5]    (285.36,189.77) .. controls (310.61,174.89) and (345.44,172.46) .. (371.96,171.96) ;
\draw  [color={rgb, 255:red, 155; green, 155; blue, 155 }  ,draw opacity=1 ][dash pattern={on 0.84pt off 2.51pt}] (253.27,142.31) .. controls (253.27,135.28) and (258.97,129.58) .. (266,129.58) -- (377.45,129.58) .. controls (384.48,129.58) and (390.18,135.28) .. (390.18,142.31) -- (390.18,180.52) .. controls (390.18,187.55) and (384.48,193.25) .. (377.45,193.25) -- (266,193.25) .. controls (258.97,193.25) and (253.27,187.55) .. (253.27,180.52) -- cycle ;

\draw    (457.03,157.93) -- (562.95,157.93) ;
\draw  [dash pattern={on 4.5pt off 4.5pt}]  (457.66,170.33) -- (563.58,170.33) ;
\draw [color={rgb, 255:red, 208; green, 2; blue, 27 }  ,draw opacity=1 ][line width=1.5]    (461.3,145.53) .. controls (516.48,177.23) and (522.84,156.15) .. (550.09,161.21) ;
\draw [color={rgb, 255:red, 65; green, 117; blue, 5 }  ,draw opacity=1 ][line width=1.5]    (464.35,190.61) .. controls (471.05,145.19) and (504.66,174.7) .. (548.28,169.64) ;
\draw  [color={rgb, 255:red, 155; green, 155; blue, 155 }  ,draw opacity=1 ][dash pattern={on 0.84pt off 2.51pt}] (432.26,143.16) .. controls (432.26,136.12) and (437.97,130.42) .. (445,130.42) -- (556.44,130.42) .. controls (563.47,130.42) and (569.17,136.12) .. (569.17,143.16) -- (569.17,181.36) .. controls (569.17,188.39) and (563.47,194.09) .. (556.44,194.09) -- (445,194.09) .. controls (437.97,194.09) and (432.26,188.39) .. (432.26,181.36) -- cycle ;

\draw [color={rgb, 255:red, 144; green, 19; blue, 254 }  ,draw opacity=1 ]   (274.79,134.23) -- (361.38,72.74) ;
\draw [shift={(363.83,71)}, rotate = 144.62] [fill={rgb, 255:red, 144; green, 19; blue, 254 }  ,fill opacity=1 ][line width=0.08]  [draw opacity=0] (10.72,-5.15) -- (0,0) -- (10.72,5.15) -- (7.12,0) -- cycle    ;
\draw [shift={(274.79,134.23)}, rotate = 324.62] [color={rgb, 255:red, 144; green, 19; blue, 254 }  ,draw opacity=1 ][fill={rgb, 255:red, 144; green, 19; blue, 254 }  ,fill opacity=1 ][line width=0.75]      (0, 0) circle [x radius= 3.35, y radius= 3.35]   ;
\draw [color={rgb, 255:red, 144; green, 19; blue, 254 }  ,draw opacity=1 ]   (441.06,131.7) -- (403.55,70.19) ;
\draw [shift={(401.99,67.63)}, rotate = 58.63] [fill={rgb, 255:red, 144; green, 19; blue, 254 }  ,fill opacity=1 ][line width=0.08]  [draw opacity=0] (10.72,-5.15) -- (0,0) -- (10.72,5.15) -- (7.12,0) -- cycle    ;
\draw [shift={(441.06,131.7)}, rotate = 238.63] [color={rgb, 255:red, 144; green, 19; blue, 254 }  ,draw opacity=1 ][fill={rgb, 255:red, 144; green, 19; blue, 254 }  ,fill opacity=1 ][line width=0.75]      (0, 0) circle [x radius= 3.35, y radius= 3.35]   ;
\draw [color={rgb, 255:red, 128; green, 128; blue, 128 }  ,draw opacity=1 ]   (138.5,179.76) -- (139.39,19.2) ;
\draw [shift={(139.41,16.2)}, rotate = 90.32] [fill={rgb, 255:red, 128; green, 128; blue, 128 }  ,fill opacity=1 ][line width=0.08]  [draw opacity=0] (8.93,-4.29) -- (0,0) -- (8.93,4.29) -- cycle    ;
\draw [color={rgb, 255:red, 128; green, 128; blue, 128 }  ,draw opacity=1 ]   (92.16,44.86) -- (488.03,44.86) ;
\draw [shift={(491.03,44.86)}, rotate = 180] [fill={rgb, 255:red, 128; green, 128; blue, 128 }  ,fill opacity=1 ][line width=0.08]  [draw opacity=0] (8.93,-4.29) -- (0,0) -- (8.93,4.29) -- cycle    ;
\draw [color={rgb, 255:red, 128; green, 128; blue, 128 }  ,draw opacity=1 ] [dash pattern={on 4.5pt off 4.5pt}]  (93.52,55.82) -- (480.13,55.82) ;

\draw  [color={rgb, 255:red, 144; green, 19; blue, 254 }  ,draw opacity=1 ][fill={rgb, 255:red, 189; green, 16; blue, 224 }  ,fill opacity=0.05 ]  (267.32, 142.57) circle [x radius= 13.6, y radius= 13.6]   ;
\draw (261.32,134.97) node [anchor=north west][inner sep=0.75pt]  [color={rgb, 255:red, 144; green, 19; blue, 254 }  ,opacity=1 ]  {$I$};
\draw  [color={rgb, 255:red, 144; green, 19; blue, 254 }  ,draw opacity=1 ][fill={rgb, 255:red, 189; green, 16; blue, 224 }  ,fill opacity=0.05 ]  (446.77, 143.42) circle [x radius= 13.9, y radius= 13.9]   ;
\draw (440.27,135.82) node [anchor=north west][inner sep=0.75pt]  [color={rgb, 255:red, 144; green, 19; blue, 254 }  ,opacity=1 ]  {$II$};
\draw (188.67,24.38) node [anchor=north west][inner sep=0.75pt]    {$z$};
\draw (154.83,22.54) node [anchor=north west][inner sep=0.75pt]    {$y_{c}$};
\draw  [color={rgb, 255:red, 255; green, 255; blue, 255 }  ,draw opacity=1 ][fill={rgb, 255:red, 255; green, 255; blue, 255 }  ,fill opacity=1 ]  (246.09,16.98) -- (270.09,16.98) -- (270.09,42.98) -- (246.09,42.98) -- cycle  ;
\draw (249.09,21.38) node [anchor=north west][inner sep=0.75pt]    {$y_{d}$};
\draw (289.07,6.83) node [anchor=north west][inner sep=0.75pt]  [color={rgb, 255:red, 208; green, 2; blue, 27 }  ,opacity=1 ]  {$W'$};
\draw (283.8,86.08) node [anchor=north west][inner sep=0.75pt]  [color={rgb, 255:red, 65; green, 117; blue, 5 }  ,opacity=1 ]  {$W$};
\draw  [color={rgb, 255:red, 255; green, 255; blue, 255 }  ,draw opacity=1 ][fill={rgb, 255:red, 255; green, 255; blue, 255 }  ,fill opacity=1 ]  (120.16,28.41) -- (138.16,28.41) -- (138.16,50.41) -- (120.16,50.41) -- cycle  ;
\draw (123.16,32.81) node [anchor=north west][inner sep=0.75pt]  [font=\small]  {$0$};
\draw  [color={rgb, 255:red, 255; green, 255; blue, 255 }  ,draw opacity=1 ][fill={rgb, 255:red, 255; green, 255; blue, 255 }  ,fill opacity=1 ]  (112.57,46.43) -- (137.57,46.43) -- (137.57,68.43) -- (112.57,68.43) -- cycle  ;
\draw (115.57,50.83) node [anchor=north west][inner sep=0.75pt]  [font=\small]  {$-1$};
\draw  [color={rgb, 255:red, 255; green, 255; blue, 255 }  ,draw opacity=1 ][fill={rgb, 255:red, 255; green, 255; blue, 255 }  ,fill opacity=1 ]  (479.15,46.27) -- (497.15,46.27) -- (497.15,70.27) -- (479.15,70.27) -- cycle  ;
\draw (482.15,50.67) node [anchor=north west][inner sep=0.75pt]    {$y$};
\end{tikzpicture}
\vspace{-1.6cm}
\caption{Behavior of functions $W$ and $W'$ for $y>\varepsilon$}
\label{fig:sketch}
\end{figure}
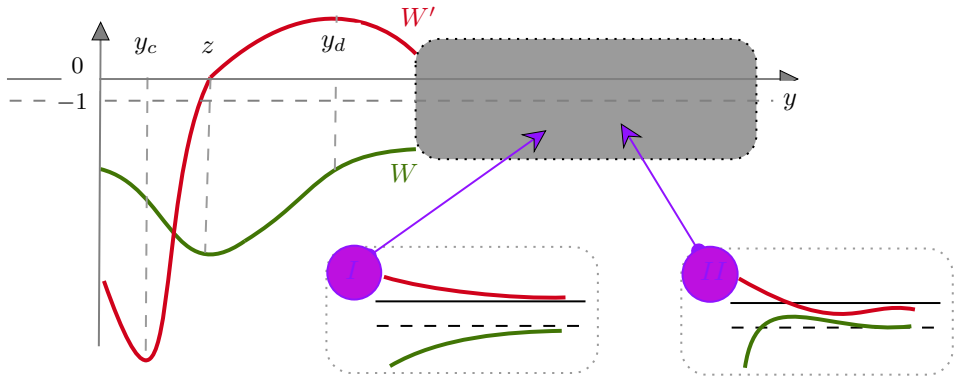

Even thought it has been proved that $W<-\frac{1}{3}$,  it is left to investigate under which conditions it is possible to obtain the lower bound $W\leq -1$. However, it can be proved that there exists $y_d>z$ such that $W(y)<-1$ for $y\in(\varepsilon,y_d)$ and
\[W''(y_d)=0,\, W'(y_d)>0,\, W(y_d)<-1.\]
\begin{proposition}
Let assumptions of  Theorem \ref{thm:main_anl} hold and define $y_d>z>\varepsilon$ such that
\[W''(y_d)=0,\, W'(y_d)>0,\, W(y_d)<-1.\]
Then $W(y)<-1$ for $y\in (\varepsilon,y_d].$
\end{proposition}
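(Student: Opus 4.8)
The plan is to show that $W$ has a single \emph{valley} on $[\varepsilon,y_d]$: it decreases from $\varepsilon$ down to the minimum point $z$ and then increases monotonically up to $y_d$. Once this shape is established, the maximum of $W$ over $[\varepsilon,y_d]$ is attained at an endpoint, and since $W(\varepsilon)=\tilde v-1<-1$ (the collapse data (\ref{id_collapse}) give $\tilde v<0$) and $W(y_d)<-1$ by hypothesis, the bound $W<-1$ follows on the whole interval. The decreasing branch is immediate: (\ref{id_collapse}) gives $W'(\varepsilon)<0$, and $z$ is by construction the first point at which $W'$ vanishes, so $W'<0$ on $(\varepsilon,z)$; hence $W$ is strictly decreasing there and $W(y)<W(\varepsilon)<-1$ for $y\in(\varepsilon,z]$, so in particular $W(z)<-1$.

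The heart of the matter is the increasing branch $[z,y_d]$. Since $W'(z)=0$, it suffices to prove $W''>0$ on $(z,y_d)$: this makes $W'$ strictly increasing, hence positive on $(z,y_d]$ (consistently with the hypothesis $W'(y_d)>0$), so $W$ is strictly increasing on $[z,y_d]$ and $W(y)<W(y_d)<-1$ for $y\in(z,y_d]$. To control the sign of $W''$ I would divide $(\ref{ode})_2$ by $Wy^2$ and write
\[
W''=\frac{1}{Wy^{2}}\,\Phi,\qquad \Phi:=\tfrac12 (Wy)^{2}\big(W'y+W+1-R\big)+(W'y+1)^{2}+(3W+1)(W+1)-\tfrac{A}{2}\big(W'y+3W+1\big),
\]
where I used $4W+3W^{2}=(3W+1)(W+1)$. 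Because $W<0$ throughout, $W''>0$ is equivalent to $\Phi<0$, and the role of $y_d$ is precisely that it is the first inflection point of $W$ after $z$, i.e.\ the first zero of $\Phi$ to the right of $z$; so the claim to establish is that $\Phi<0$ on $(z,y_d)$ while $\Phi(y_d)=0$.

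The sign analysis of $\Phi$ is run in the regime $W<-1$ that must be maintained. There $W+1<0$ and $3W+1<0$, so $(3W+1)(W+1)>0$ and, by Theorem \ref{thm:main_anl} together with Proposition \ref{thm:Hless}, $W'y+3W+1=H<0$, making $-\tfrac{A}{2}H\ge0$; these terms, like $(W'y+1)^2$, stay bounded, whereas the leading term $\tfrac12(Wy)^2(W'y+W+1-R)$ is negative and of large magnitude as soon as $W'y<R+|W+1|$, that is, as long as $W'$ has not grown too large. Keeping $R$ small through Proposition \ref{thm:Rvac} and the monotonicity $R'<0$, one checks that on $(z,y_d)$ the slope $W'$ stays below this threshold, so the negative leading term dominates and $\Phi<0$, while at $y_d$ the growth of $W'$ raises $W'y+W+1-R$ exactly to the value that makes $\Phi$ vanish. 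The main obstacle is precisely this last estimate: ruling out a spurious inflection of $W$ between $z$ and $y_d$, equivalently showing $\Phi$ keeps one sign up to $y_d$; this is delicate for the isothermal case $A=1$, where the pressure term $-\tfrac{A}{2}H$ is strictly positive and must be dominated. That the balance is genuinely sharp is visible from the exact solution $R=2/y^2$, $W\equiv-1$ of (\ref{ode}) for $A=1$, so it is the strict inequalities in the collapse data $\tilde v,\tilde v_1<0$, $\tilde v_1-\tilde v<0$ that force the strict bound $W<-1$ all the way up to $y_d$.
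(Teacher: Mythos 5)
Your overall shape for the argument (decreasing on $(\varepsilon,z]$, increasing on $[z,y_d]$, then endpoint comparison using $W(y_d)<-1$) is sound, but the load-bearing step --- $W''>0$, equivalently $\Phi<0$, on all of $(z,y_d)$ --- is never established. You concede this yourself (``the main obstacle is precisely this last estimate''), and the sketch you offer is circular: the sign analysis of $\Phi$ is run ``in the regime $W<-1$ that must be maintained'', which is exactly the conclusion of the proposition. The paper avoids both problems. First, it reads the conditions on $y_d$ as saying that $y_d$ is the \emph{first} inflection point of $W$ after the minimum $z$, so that $W'>0$ and $W''>0$ on $(z,y_d)$ are given data, not things to prove (its proof states ``we know that $W'(y)>0$ and $W''(y)>0$ on $(z,y_d)$''). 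Under that reading your own first paragraph already finishes the proof, with no analysis of $\Phi$ at all. Second, the actual content of the paper's proof is a \emph{pointwise} contradiction rather than a global domination estimate: suppose $W(b)=-1$ at some $b\in(z,y_d)$; evaluating $(\ref{ode})_2$ at $b$, where $W+1=0$, gives
\[
-b^2W''(b)=\tfrac12 b^2\big(W'(b)b-R(b)\big)+\big(W'(b)b+1\big)^2-1-\tfrac{A}{2}H(b)>0,
\]
using $W'(b)>0$ and $H(b)<0$ (available from Theorem \ref{thm:main_anl}), hence $W''(b)<0$, contradicting convexity on $(z,y_d)$. Note the contrast: the delicate balance you worry about (the term $W'y+W+1-R$) only needs to be controlled at the single crossing point $b$, where $W+1=0$ simplifies it, not along the whole interval as your plan requires.

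There is also an algebra slip that infects your sign bookkeeping: $4W+3W^2\neq(3W+1)(W+1)$; the correct identity is $4W+3W^2+1=(3W+1)(W+1)$, so your $\Phi$ exceeds the true right-hand side of $(\ref{ode})_2$ by $1$. This is not cosmetic: the true term $4W+3W^2$ is \emph{negative} for $W$ near $-1$ (e.g.\ at $W=-1.05$), so it cannot be listed among the positive contributions. It must be grouped as $\big[(W'y+1)^2-1\big]+(3W+1)(W+1)$, which is nonnegative precisely because $W'>0$ --- and this is exactly the grouping the paper exploits at the point $W(b)=-1$, where $(3W+1)(W+1)$ vanishes and $(W'(b)b+1)^2-1>0$ carries the sign.
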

\begin{proof}
We know that $W'(y)\leq 0$ and as a consequence $W(y)\leq W(\varepsilon)<-1$ for $y\in[\varepsilon,z]$, so it is left to prove that $W(y)<-1$ on the interval $(z,y_d).$
Suppose that there exists $b\in (z, y_d)$ such that $W(b)=-1$ (we know that $W'(y)>0$ and $W''(y)>0$ on $(z, y_d)$). From the equation (\ref{ode}) we have
\[-b^2W''(b)=\frac{1}{2}b^2(W'(b)b-R(b))+\underbrace{(W'(b)b+1)^2-1}_{>0 \text{ since } W'(b)>0}-\frac{H(b)}{2}>0,\]
which implies that $W''(b)<0$. Hence, the function $W$ cannot reach value $-1$ for $y\leq y_d.$ 
\end{proof}

Behavior of the functions $W$ and $W'$ for $y>y_d$ is not completely explained. Most numerical experiments show that the function $W$ will continue to increase and it will never pass the line $W=-1$ ($W(y)<-1$ and $W'(y)>0$ for $y>y_d$). However, the second scenario (see Figure \ref{fig:sketch}) when  increasing $W$ crosses the line  $W=-1$ at the point $y_e>y_d$, while $W'$ becomes negative (and $W$ decreasing) for $y>y_f>y_e$ is not yet rejected. It stays unclear if the results that will support that scenario are obtained as a consequence of numerical error. However, one should have in mind that even if $W$ becomes larger than $-1$ at some point, the following
\[H(y)=W'(y)y+3W+1<0, \quad W(y)<-\frac{1}{3}\]
will hold due to Theorem \ref{thm:main_anl}. Hence, we have
\[W'(y)<-\frac{3W+1}{y}<\frac{2}{y} \quad \text{for } -1<W(y)<-\frac{1}{3}.\]


\end{document}